\newtheorem{theorem}{Theorem}[section]
\newtheorem{lemma}[theorem]{Lemma}
\newtheorem{assumption}[theorem]{Assumption}
\newtheorem{proposition}[theorem]{Proposition}
\theoremstyle{remark}
\newtheorem{remark}[theorem]{Remark}
\renewenvironment{proof}[1][Proof]{ {\itshape \noindent {#1.}} }{$\Box$
\medskip}
\numberwithin{equation}{section}
\newcommand{\R}{\mathbb{R}}
\newcommand{\Pb}{\mathbb{P}}
\newcommand{\E}{\mathbb{E}}
\newcommand{\F}{\mathcal{F}}
\newcommand{\G}{\mathcal{G}}
\newcommand{\C}{\mathcal{C}}
\newcommand{\V}{\mathbb{V}}
\newcommand{\RR}{\mathcal{R}}
\newcommand{\eps}{\varepsilon}
\def\les{\lesssim}
\begin{document}

\title{Homogenization of Parabolic Equations with\\ Large Time-dependent Random Potential}

\author{Yu Gu\thanks{Department of Applied Physics \& Applied
Mathematics, Columbia University, New York, NY 10027 (yg2254@columbia.edu; gb2030@columbia.edu)}  \and Guillaume Bal\footnotemark[1]}

\maketitle

\begin{abstract}
This paper concerns the homogenization problem of a parabolic equation with large, time-dependent, random potentials in high dimensions $d\geq 3$. Depending on the competition between temporal and spatial mixing of the randomness, the homogenization procedure turns to be different. We characterize the difference by proving the corresponding weak convergence of Brownian motion in random scenery. When the potential depends on the spatial variable macroscopically, we prove a convergence to SPDE.
\end{abstract}

\section{Introduction}

Small scales abound in equations of physical importance, where homogenization has become popular to analyze the asymptotics and reduce the complexity. For parabolic equation with random coefficients, when the solution can be expressed as the average with respect to certain diffusion process, homogenization may be recast as a problem of weak convergence of random motion in random environment, see a good introduction in \cite{komorowski2012fluctuations}.

The equation we consider in this paper is of the form $\partial_t u_\eps(t,x)=\Delta u_\eps(t,x)+iV_\eps(t,x)u_\eps(t,x)$. The size of $V_\eps$ is chosen large enough to produce some non-trivial effects on the asymptotic limit. The imaginary unit brings stability, and saves the effort of controlling the unbounded exponential function. Similar types of equations, including the ones with time-independent or real potentials, are analyzed in \cite{bal2009convergence, bal2010homogenization, B-AMRX-11, pardoux2006homogenization, pardoux2012homogenization,hairer2013random, gu2013weak}, using analytic and probabilistic methods, see a review in \cite{gu2013george}. The method we use here is probabilistic, i.e., a Feynman-Kac representation and weak convergence approach.

By a Feynman-Kac formula, a key object to analyze is the so-called Brownian motion in random scenery, i.e., the random process of the form $\int_0^t V_\eps(s,B_s)ds$, which corresponds to Kesten's model of random walk in random scenery in the discrete setting \cite{kesten1979limit}. If $V_\eps(t,x)\sim V(t/\eps^\alpha,x/\eps)$, by a simple change of variables, we observe a threshold of $\alpha=2$, which separates the effects of the random mixings generated by temporal and spatial variables. In other words, depending on whether $\alpha>2$ or $\alpha<2$, the averaging of $\int_0^t V_\eps(s,B_s)ds$ is induced by the temporal or spatial mixing of $V$, respectively. As a result, the ways we prove weak convergence of $\int_0^tV_\eps(s,B_s)ds$ are forced to be different correspondingly. When $\alpha>2$, it is a standard proof of central limit theorem for functions of mixing processes \cite[Chapter 4]{billingsley2009convergence} after freezing the Brownian motion. When $\alpha\leq 2$, the spatial mixing dominates. We make use of the Brownian motion by the Kipnis-Varadhan's method \cite{kipnis1986central}, i.e., constructing the corrector function and applying martingale decomposition. When $\alpha=2$, an ergodicity suffices to pass to the limit; when $\alpha<2$, a quantitative martingale central limit theorem \cite{mourrat2012kantorovich} is applied. As $\alpha\to \infty$, the spatial mixing tends to zero, so heuristically the Brownian motion remains in the weak convergence limit of $\int_0^t V_\eps(s,B_s)ds$, leading to a stochastic equation.

The rest of the paper is organized as follows. In Section \ref{sec:mainTH}, we introduce the problem setup and present the main results. Section \ref{sec:alphaG2}, \ref{sec:alphaS2} and \ref{sec:alphaInfty} are devoted to the cases $\alpha\in (2,\infty)$, $\alpha\in [0,2]$ and $\alpha=\infty$, respectively. Technical lemmas are left in the Appendix.

Here are notations used thoughout the paper. Since we have two independent random sources, i.e., the random potential from the equation and the Brownian motion induced by the Feynman-Kac formula, we are in the product probability space. $\E$ is used to denote the expectation with respect to the random environment, and $\E_B$ the expectation with respect to the Brownian motion. Joint expectation is denoted by $\E\E_B$. We use $a\wedge b=\min(a,b)$ and $a\vee b=\max(a,b)$. $N(\mu,\sigma^2)$ denotes the normal distribution with mean $\mu$ and variance $\sigma^2$, and $q_t(x)$ is the density of $N(0,t)$. We use $I_d$ to denote the $d\times d$ identity matrix. $a\les b$ stands for $a\leq Cb$ for some constant $C$ independent of $\eps$.

\section{Problem setup and main results}
\label{sec:mainTH}

We are interested in equations of the form
\begin{equation}
\partial_t u_\eps(t,x)=\frac12\Delta u_\eps(t,x)+i\frac{1}{\eps^\delta}V(\frac{t}{\eps^\alpha},\frac{x}{\eps})u_\eps(t,x),
\label{eq:mainEq}
\end{equation}
where $\alpha,\delta>0$, the dimension $d\geq 3$, and $V(t,x)$ is a mean-zero, time-dependent, stationary random potential. The initial condition $u_\eps(0,x)=f(x)\in \C_b(\R^d)$. Without loss of generality, we assume $f$ taking values in $\R$. For fixed $\alpha$, $\delta$ is chosen so that the large, highly oscillatory, random potential generates non-trivial effects on $u_\eps$ as $\eps\to 0$. It turns out that for different values of $\alpha$, the asymptotic effects may be different, and the ways we prove convergence are different as well.

Before presenting the main result, we make some assumptions on $V(t,x)$.

Let $(\Omega,\mathcal{F},\Pb)$ be a \emph{random medium} associated with a group of measure-preserving, ergodic transformations $\{\tau_{(t,x)},t\in \R, x\in\R^d\}$. Let $\V\in L^2(\Omega)$ with $\int_\Omega \V(\omega)\Pb(d\omega)=0$. Define $V(t,x,\omega)=\V(\tau_{(t,x)}\omega)$. The inner product and norm of $L^2(\Omega)$ are denoted by $\langle.,.\rangle$ and $\|.\|$, respectively.

By defining $T_{(t,x)}$ on $L^2(\Omega)$ as $T_{(t,x)}f(\omega)=f(\tau_{(t,x)}\omega)$ and assuming it is strongly continuous in $L^2(\Omega)$, we obtain the spectral resolution
\begin{equation}
T_{(t,x)}=\int_{\R^d}e^{i\xi_0 t+i\xi\cdot x}U(d\xi_0,d\xi),
\end{equation}
where $\xi=(\xi_1,\ldots,\xi_d)$ and $U(d\xi_0,d\xi)$ is the associated projection valued measure. Let $\{D_k,k=0,\ldots,d\}$ be the $L^2(\Omega)$ generator of $T_{(t,x)}$.

Let $\hat{R}(\xi_0,\xi)$ be the power spectrum associated with $\V$, i.e., $\hat{R}(\xi_0,\xi)d\xi_0d\xi=(2\pi)^{d+1}\langle U(d\xi_0, d\xi)\V,\V\rangle$, we obtain
\begin{equation}
R(t,x)=\E\{V(t,x)V(0,0)\}=\langle T_{(t,x)}\V,\V\rangle=\frac{1}{(2\pi)^{d+1}}\int_{\R^{d+1}}e^{i\xi_0 t+i\xi\cdot x}\hat{R}(\xi_0,\xi)d\xi_0d\xi.
\end{equation}

For any set $S\subseteq \R^{d+1}$, define the $\sigma-$algebras: $\F_S=\sigma(V(s,x):(s,x)\in S)$, and we assume a finite range of dependence of $V$.

\begin{assumption}
$V$ is uniformly bounded, and there exists $M>0$ such that $\F_{S_1}$ and $\F_{S_2}$ are independent for any two sets satisfying $dist(S_1,S_2)\geq M$, where $dist(S_1,S_2)$ is the Euclidean distance between $S_1,S_2$.
\label{ass:mixing}
\end{assumption}

Clearly the finite range of dependence implies that the following $\varphi-$mixing coefficient
\begin{equation}
\varphi(r):=\sup_{A\in \F_{S_1}, B\in \F_{S_2}, \Pb(B)>0, dist(S_1,S_2)\geq r}|\Pb(A|B)-\Pb(A)|
\end{equation}
is compactly supported on $[0,\infty)$.  Since $\varphi$ is bounded, we have $\varphi(r)\les 1\wedge r^{-n}$ for any $n>0$. We also have the $\rho-$mixing property by \cite[Page 170, Lemma 1]{billingsley2009convergence}:
\begin{equation}
|\E\{XY\}-\E\{X\}\E\{Y\}|\leq 2\varphi^\frac12(r)\left(\E\{X^2\}\E\{Y^2\}\right)^\frac12,
\end{equation}
if  $X$ is $\F_{S_1}-$measurable and $Y$ is $\F_{S_2}-$measurable with $dist(S_1,S_2)\geq r$.
%$\F^t=\sigma(V(s,x):s\geq t,x\in\R^d)$ and $\F_t=\sigma(V(s,x):s\leq t,x\in\R^d)$, and we assume the following property of $V$.

%\begin{assumption}[Mixing property]
%$V$ is uniformly bounded. There exists a function $\varphi(r):[0,\infty)\to [0,\infty)$ such that $\forall n>0$, $\varphi(r)\leq C_n(1\wedge r^{-n})$ for some $C_n$, and
%\begin{equation}
%\sup_{A\in \F_{S_1}, B\in \F_{S_2}, \Pb(B)>0, dist(S_1,S_2)\geq r}|\Pb(A|B)-\Pb(A)|\leq \varphi(r),
%\end{equation}
%where $dist(S_1,S_2)$ is the Euclidean distance between $S_1,S_2$.
%\label{ass:mixing}
%\end{assumption}

\begin{remark}
Retracing the proof, one can see that instead of the finite range dependence, we only need $\varphi-$mixing in $t$ and $\rho-$mixing in $(t,x)$, with the corresponding mixing coefficients decaying sufficiently fast.
\end{remark}

In the following, we denote
\begin{eqnarray}
\sigma(V(s,x):s\leq t, x\in \R^d)=\F_t,\\
\sigma(V(s,x):s\geq t, x\in \R^d)=\F^t.
\end{eqnarray}

%By \cite[Page 170, Lemma 1]{billingsley2009convergence}, the above mixing property implies that
%\begin{equation}
%|\E\{XY\}-\E\{X\}\E\{Y\}|\leq 2\varphi^\frac12(r)\left(\E\{X^2\}\E\{Y^2\}\right)^\frac12,
%\end{equation}
%if  $X$ is $\F_{S_1}-$measurable and $Y$ is $\F_{S_2}-$measurable with $dist(S_1,S_2)\geq r$. For example, we have $|R(t,x)|\les 1\wedge(|t|^2+|x|^2)^{-n}$ for any $n>0$.

The following is the main result.
\begin{theorem}[$\alpha\in [0,\infty)$: \emph{homogenization}]
Under Assumption \ref{ass:mixing}, let
\begin{eqnarray}
\partial_t u_\eps(t,x)&=&\frac12\Delta u_\eps(t,x)+i\frac{1}{\eps^{\frac{\alpha}{2}\vee1}}V(\frac{t}{\eps^\alpha},\frac{x}{\eps})u_\eps(t,x),\\
\partial_t u_0(t,x)&=&\frac12\Delta u_0(t,x)-\rho(\alpha) u_0(t,x),
\end{eqnarray}
with initial condition $u_\eps(0,x)=u_0(0,x)=f(x)$ and
\begin{equation}
\rho(\alpha)=\left\{
\begin{array}{ll}
\int_0^\infty R(t,0)dt & \alpha\in (2,\infty),\\
\int_0^\infty \E_B\{R(t,B_t)\}dt & \alpha=2,\\
\int_0^\infty \E_B\{R(0,B_t)\}dt & \alpha\in [0,2).
\end{array} \right.
\end{equation}
 Then $u_\eps(t,x)\to u_0(t,x)$ in probability as $\eps \to 0$.
 \label{thm:homo}
\end{theorem}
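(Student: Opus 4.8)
The plan is to use the Feynman--Kac representation
\[
u_\eps(t,x)=\E_B\Big\{f(x+B_t)\exp\Big(i\frac{1}{\eps^{\frac{\alpha}{2}\vee 1}}\int_0^t V\big(\tfrac{s}{\eps^\alpha},\tfrac{x+B_s}{\eps}\big)\,ds\Big)\Big\},
\]
where $B$ is a standard Brownian motion under $\E_B$, independent of the medium. By the change of variables $s\mapsto \eps^\alpha s$ in the time integral when $\alpha>2$ (and a parallel scaling when $\alpha\le 2$), the exponent is, up to a deterministic prefactor, a Brownian motion in random scenery of the form $\int_0^{T_\eps}V_\eps(s,\tilde B_s)\,ds$. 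The strategy is: (i) show that for each fixed $(t,x)$ this random exponent, call it $X_\eps(t,x)$, converges in law jointly with the Brownian path data to a Gaussian random variable whose conditional variance given $B$ is $2\rho(\alpha)\,t$ (with $\rho(\alpha)$ as in the statement, the three cases reflecting which mixing dominates); (ii) deduce that $\E_B\{f(x+B_t)e^{iX_\eps(t,x)}\}$ converges to $\E_B\{f(x+B_t)e^{-\rho(\alpha)t}\}=u_0(t,x)$ by the Feynman--Kac formula for the limiting equation; (iii) upgrade pointwise (in $\omega$, in distribution) convergence to convergence in probability by an $L^2(\Omega)$ second-moment computation showing $\E|u_\eps(t,x)-u_0(t,x)|^2\to 0$.

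The heart of the matter is step (i), and the three regimes genuinely require three different arguments, exactly as announced in the introduction. For $\alpha\in(2,\infty)$ one freezes the Brownian path: conditionally on $B$, $\int V$ is an additive functional of the fast $\varphi$-mixing process $s\mapsto V(s,\cdot)$, and a central limit theorem for sums/integrals of mixing sequences (Billingsley, Chapter 4) gives, after rescaling by $\eps^{-\alpha/2}$, a Gaussian limit with variance $2t\int_0^\infty R(r,0)\,dr$; the spatial argument of $V$ contributes nothing in the limit because the Brownian increment over a time window $\eps^\alpha$ is $o(\eps)$. For $\alpha\le 2$ one cannot freeze $B$: instead one invokes Kipnis--Varadhan. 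Introduce the environment process seen from the Brownian particle, write $V_\eps$ (suitably scaled) as $(\mathcal L_\eps + \text{resolvent correction})$ applied to a corrector $\chi_\eps$ solving $(\lambda - \mathcal L)\chi_\lambda=\mathcal V$ in $L^2(\Omega)$, split $X_\eps$ into a martingale part plus a negligible remainder, and apply a martingale CLT. When $\alpha=2$ the temporal and spatial scalings are comparable, the generator is $\tfrac12\Delta + \partial_t$-type, the corrector exists by the $d\ge 3$ spectral-gap-type bound $\int \hat R(\xi_0,\xi)/|\xi|^2\,d\xi_0 d\xi<\infty$, and mere ergodicity of the environment process suffices to identify the limiting variance as $2t\int_0^\infty \E_B\{R(r,B_r)\}\,dr$. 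When $\alpha<2$ the time variable of $V$ is ``infinitely fast'' relative to space, so $R(r,B_r)$ is replaced by its time-integrated / $r=0$ effective version $\E_B\{R(0,B_r)\}$; here the remainder terms in the martingale decomposition are not automatically small, and one needs the quantitative martingale CLT of Mourrat (Kantorovich distance bounds) together with explicit decay rates for $\chi_\lambda$ to close the estimate with a rate.

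Step (iii) is where the two independent randomness sources must be handled together. The natural device is to compute
\[
\E\big\{u_\eps(t,x)\overline{u_\eps(t,x)}\big\}
=\E_B\E_{B'}\Big\{f(x+B_t)f(x+B'_t)\,\E\Big[\exp\Big(i c_\eps\!\int_0^t V(\tfrac{s}{\eps^\alpha},\tfrac{x+B_s}{\eps})ds - i c_\eps\!\int_0^t V(\tfrac{s}{\eps^\alpha},\tfrac{x+B'_s}{\eps})ds\Big)\Big]\Big\},
\]
with two independent Brownian motions $B,B'$, and to show this factorizes in the limit into $|u_0(t,x)|^2$: the key point is that two independent Brownian paths separate in space (in $d\ge3$ they meet only on a small set), so the spatial decorrelation of $V$ decouples the two exponentials, each converging as in step (i). Combining $\E|u_\eps|^2\to |u_0|^2$, $\E u_\eps\to u_0$ (from (i)--(ii)), and that $u_0$ is deterministic gives $\E|u_\eps-u_0|^2\to 0$, hence convergence in probability.

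I expect the main obstacle to be the uniform control of the corrector and of the remainder terms in the martingale decomposition in the regime $\alpha<2$: one must show the resolvent corrector $\chi_\lambda$ and $\lambda\chi_\lambda$ have the right $L^2(\Omega)$ decay as $\lambda\to0$ (uniformly along the $\eps$-dependent time horizon), so that the non-martingale part of $\int V_\eps$ is genuinely lower order; the $d\ge 3$ hypothesis and the finite-range (hence fast-mixing) assumption enter precisely here, through bounds on $\int \hat R(\xi_0,\xi)(\lambda+|\xi|^2)^{-2}\,d\xi_0\,d\xi$. A secondary difficulty is making the heuristic ``two independent Brownian paths decorrelate the potential'' quantitative enough in step (iii), which will again use $\varphi$- and $\rho$-mixing together with Brownian occupation-measure estimates valid in $d\ge3$.
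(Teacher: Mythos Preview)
Your proposal is essentially correct and follows the same architecture as the paper: Feynman--Kac, then a CLT for the frozen-path mixing process when $\alpha>2$ (the paper carries this out via a blocking argument on the time interval, Propositions~3.1--3.2), and Kipnis--Varadhan with a resolvent corrector and martingale decomposition when $\alpha\le 2$, using ergodicity at $\alpha=2$ and Mourrat's quantitative martingale CLT for $\alpha<2$.

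Two remarks. First, your step (iii) is only needed for $\alpha>2$; for $\alpha\le 2$ the paper obtains $\E|u_\eps(t,x)-u_0(t,x)|\to 0$ \emph{directly} from the martingale decomposition, by bounding $|e^{iX_\eps}-e^{iM_t^\eps}|$ via $\lambda\|\Phi_\lambda\|^2\to 0$ and then controlling the distance between the martingale and its Gaussian limit either almost surely in $\omega$ (when $\alpha=2$) or through the quantitative CLT bound on the quadratic variation (when $\alpha<2$); no second-moment computation with two independent Brownian paths is required there. Second, your description of $\alpha<2$ as ``time infinitely fast'' is inverted: after the Brownian rescaling the temporal argument becomes $\eps^{2-\alpha}s\to 0$, so time is \emph{frozen}, which is exactly why the limiting variance involves $R(0,B_r)$ rather than a time-integrated quantity.
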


When $\alpha>2$, by a change of parameter $\eps^\alpha\mapsto \eps$, we have $\frac{1}{\eps^\frac{\alpha}{2}}V(\frac{t}{\eps^\alpha},\frac{x}{\eps})\mapsto \frac{1}{\sqrt{\eps}}V(\frac{t}{\eps}, \frac{x}{\eps^{\frac{1}{\alpha}}})$, so $\alpha=\infty$ corresponds to the case when $V$ has no micro-structure in the spatial variable, and the potential is of the form $\frac{1}{\sqrt{\eps}}V(\frac{t}{\eps},x)$. We obtain a transition from homogenization to convergence to SPDE in the following theorem.

\begin{theorem}[$\alpha=\infty$: \emph{convergence to SPDE}]
Under Assumption \ref{ass:mixing}, let
\begin{eqnarray}
\partial_t u_\eps(t,x)&=&\frac12\Delta u_\eps(t,x)+i\frac{1}{\sqrt{\eps}}V(\frac{t}{\eps},x)u_\eps(t,x),\\
\partial_t u_0(t,x)&=&\frac12\Delta u_0(t,x)+i\dot{W}(t,x)\circ u_0(t,x),
\end{eqnarray}
with initial condition $u_\eps(0,x)=u_0(0,x)=f(x)$ and Gaussian noise $\dot{W}(t,x)$ of covariance structure $\E\{\dot{W}(t,x)\dot{W}(s,y)\}=\delta(t-s)\int_\R R(t,x-y)dt$. Then $u_\eps(t,x)\Rightarrow u_0(t,x)$ in distribution as $\eps\to0$.
\label{thm:spde}
\end{theorem}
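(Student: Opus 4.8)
\textbf{Proof proposal for Theorem \ref{thm:spde}.}

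The plan is to exploit the Feynman--Kac representation
\[
u_\eps(t,x)=\E_B\Big\{f(x+B_t)\exp\Big(i\tfrac{1}{\sqrt\eps}\int_0^t V\big(\tfrac{t-s}{\eps},x+B_s\big)\,ds\Big)\Big\},
\]
and to show first, pointwise in $(t,x)$, that the real random field $X_\eps(t,x):=\tfrac{1}{\sqrt\eps}\int_0^t V(\tfrac{t-s}{\eps},x+B_s)\,ds$ converges, jointly with a finite collection of space-time points, to a Gaussian field, with the Brownian motion surviving in the limit. Concretely I would freeze the Brownian path and treat $X_\eps$ as an additive functional of the $\varphi$-mixing-in-time potential: rescaling $s=\eps r$ turns the integral into $\sqrt\eps\int_0^{t/\eps} V(\tfrac{t}{\eps}-r, x+B_{\eps r})\,dr$, and because the spatial argument $x+B_{\eps r}$ moves only on scale $\sqrt\eps$ over the relevant $r$-window while the temporal argument decorrelates on scale $1$, the classical CLT for mixing sequences (Billingsley, Chapter 4) gives, conditionally on $B$, a Gaussian limit with conditional variance $\int_0^t \big(\int_\R R(\tau,0)\,d\tau\big)\,ds = t\int_\R R(\tau,0)\,d\tau$ at a single point; at several points $(t_j,x_j)$ the cross-covariances produce exactly $\int_0^{t\wedge t'} \int_\R R(\tau, (x+B_s)-(x'+B'_s))\,d\tau\,ds$, which is the Itô-type covariance of the Stratonovich noise $i\dot W\circ u_0$ after one accounts for the $\tfrac12$-variance (drift) correction — this is where the Stratonovich circle, rather than Itô, enters, and it must be tracked carefully.

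Next I would upgrade this finite-dimensional Gaussian convergence (in the joint environment--Brownian space) to convergence of $u_\eps$ by passing through the exponential: since $V$ is uniformly bounded the imaginary exponential is bounded by $1$, so dominated convergence transfers the weak convergence of $X_\eps$ into convergence of $u_\eps(t,x)=\E_B\{f(x+B_t)e^{iX_\eps(t,x)}\}$ to $\E_B\{f(x+B_t)\,\E[e^{iX_0(t,x)}\mid B]\}$, and a second-moment / Wick-type computation identifies the latter with the mild-solution formula for $u_0$. To get genuine process convergence $u_\eps\Rightarrow u_0$ in an appropriate function space (e.g. $C([0,T];\C_b)$ or with a weighted sup-norm / $L^2$-in-$x$ topology), I would combine the finite-dimensional convergence just described with a tightness estimate: bound $\E\E_B|u_\eps(t,x)-u_\eps(t',x')|^{2p}$ using the boundedness of $V$, the Hölder continuity of Brownian motion, and the mixing decay to control increments of $X_\eps$, then invoke Kolmogorov's criterion. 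One then checks that every limit point solves the martingale/mild formulation of the SPDE; uniqueness of the mild solution of the linear multiplicative-noise equation (standard, again using $|V|$ bounded so moments are finite) pins down the limit as $u_0$.

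The main obstacle I anticipate is twofold. First, proving the \emph{joint} Gaussian convergence of $X_\eps$ over space-time points while the Brownian motion is retained: one cannot simply "freeze $B$ and forget it," because in the limiting SPDE $B$ and $W$ are genuinely coupled, so the argument must produce a limit of the form "$\E_B$ of a conditionally Gaussian functional" and then verify that this coincides with the Stratonovich mild solution — the bookkeeping of the $\tfrac12\int_\R R(0,\cdot)$ correction term under the two scalings is the delicate point. Second, establishing tightness in a strong enough topology: the potential term $\tfrac{1}{\sqrt\eps}V(t/\eps,x)$ is only distributionally convergent (to white-in-time noise), so the natural space for $u_0$ is dictated by the regularity of $\dot W$ in $x$ (which is function-valued since $R(t,\cdot)$ is a bona fide covariance), and one must show the $u_\eps$ do not oscillate out of that space; I would handle this by the moment bounds above rather than by any pathwise estimate on the potential itself. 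Everything else — the Feynman--Kac formula, existence/uniqueness for the SPDE, and the mixing inequalities — is either standard or already supplied in the excerpt.
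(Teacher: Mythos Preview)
Your proposal contains a genuine gap at the central step, and also does more work than the theorem requires.

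\textbf{What the theorem actually asks.} The statement is pointwise in $(t,x)$: show the random variable $u_\eps(t,x)$ (randomness in the environment only, since $\E_B$ has been taken) converges in distribution to the random variable $u_0(t,x)$. No process convergence, no tightness, no Kolmogorov criterion is needed; that part of your proposal is extra.

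\textbf{The gap.} You argue that, conditionally on the Brownian path $B$, the environment CLT gives $X_\eps\Rightarrow N(0,t\RR(0))$ with $\RR(0)=\int_\R R(\tau,0)\,d\tau$. You then write that dominated convergence turns this into
\[
u_\eps(t,x)=\E_B\{f(x+B_t)e^{iX_\eps}\}\ \longrightarrow\ \E_B\{f(x+B_t)\,\E[e^{iX_0}\mid B]\}.
\]
But the right-hand side is a \emph{deterministic number}: once you take $\E[\cdot\mid B]$ over the noise and then $\E_B$ over the path, all randomness is gone. What you have actually proved is $\E\{u_\eps(t,x)\}\to \E\{u_0(t,x)\}$, i.e.\ convergence of the first moment, not convergence in distribution. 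Since $u_0(t,x)$ is genuinely random in $W$, identifying the limit with ``the mild-solution formula for $u_0$'' cannot be right at this level.

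\textbf{What is missing / how the paper proceeds.} Because $|u_\eps|\le\|f\|_\infty$, convergence in distribution follows from convergence of all mixed moments $\E\{u_\eps^{N_1}\overline{u_\eps}^{\,N_2}\}$. Expanding each factor by Feynman--Kac introduces $N_1+N_2$ \emph{independent} Brownian motions $B^1,\ldots,B^{N_1+N_2}$ at the \emph{same} $(t,x)$, and the quantity to analyze is the annealed limit of
\[
\Big(\eps B^j_{t/\eps^2},\ \eps\!\int_0^{t/\eps^2} V(s,\eps B^j_s)\,ds\Big)_{j=1}^{N_1+N_2}.
\]
The conditional (on all $B^j$) Gaussian limit now has the \emph{cross}-covariances $\int_0^t\RR(B^m_s-B^n_s)\,ds$; these are exactly what make $\E\{|u_0|^2\}>|\E\{u_0\}|^2$ and encode the randomness of $u_0$. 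Your proposal mentions cross-covariances, but you attach them to different space--time points $(t_j,x_j)$ with different paths in order to get finite-dimensional distributions of the \emph{process}; the right move is to attach them to independent replicas of $B$ at the same $(t,x)$ to get higher \emph{moments}. Mechanically, the paper carries this out by the same big--block/small--block decomposition used for $\alpha>2$ (your ``Billingsley Chapter~4'' CLT), Taylor-expands each block factor, and shows $\sum_k \eps^2\int_{I_k^2}R(s-u,\eps B^m_s-\eps B^n_u)\,ds\,du\to\int_0^t\RR(B^m_s-B^n_s)\,ds$. The Stratonovich bookkeeping you worry about drops out automatically because the diagonal terms $m=n$ contribute $\RR(0)t$.

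In short: your ingredients (Feynman--Kac, freeze the path, mixing-in-time CLT, bounded exponential) are the right ones, but they must be deployed on products of independent copies of $u_\eps$ to yield the method of moments; a single conditional CLT gives only $\E\{u_\eps\}$.
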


\begin{remark}
The product $\circ$ in the limiting SPDE is in the Stratonovich's sense. 
\end{remark}

The solution to \eqref{eq:mainEq} is written by Feynman-Kac formula as
\begin{equation}
u_\eps(t,x)=\E_B\{f(x+B_t)\exp(i\frac{1}{\eps^\delta}\int_0^tV(\frac{t-s}{\eps^\alpha},\frac{x+B_s}{\eps})ds)\}.
\end{equation}
Since Theorem \ref{thm:homo} and \ref{thm:spde} are both results for fixed $(t,x)$, by stationarity of $V$, $u_\eps(t,x)$ has the same distribution as
\begin{equation}
\begin{aligned}
\tilde{u}_\eps(t,x)=&\E_B\{f(x+B_t)\exp(i\frac{1}{\eps^\delta}\int_0^tV(\frac{-s}{\eps^\alpha},\frac{x+B_s}{\eps})ds)\}\\
=&\E_B\{f(x+B_t)\exp(i\frac{1}{\eps^\delta}\int_0^t\tilde{V}(\frac{s}{\eps^\alpha},\frac{x+B_s}{\eps})ds)\},
\end{aligned}
\end{equation}
where $\tilde{V}(s,x):=V(-s,x)$. Since $\tilde{V}$ and $V$ has the same covariance function and mixing property we need, from now on we will write our solution to \eqref{eq:mainEq} for simplicity as
\begin{equation}
u_\eps(t,x)=\E_B\{f(x+B_t)\exp(i\frac{1}{\eps^\delta}\int_0^tV(\frac{s}{\eps^\alpha},\frac{x+B_s}{\eps})ds)\}.
\end{equation}

\subsection{Remarks on low dimensional cases}

The case $d=1$ with a real potential is addressed in \cite{pardoux2012homogenization, hairer2013random} using probabilistic and analytic approaches respectively. Their result shows that $\alpha\in(0,\infty)$ leads to homogenization while $\alpha=0,\infty$ leads to SPDE. When $\alpha>2$, our proof is similar to that in \cite{pardoux2012homogenization}. When $\alpha\in [0,2]$, we follow the approach in \cite{gu2013weak}, which relies on the fact $d\geq 3$.

For $d=2$, while the case of $\alpha>2$ can be analyzed in the same way, our approach for $\alpha\in [0,2]$ does not necessarily work. For the time-independent potential, weak convergence results are obtained in \cite{remillard1991limit,gu2013invariance}.

\section{$\alpha\in (2,\infty)$: temporal mixing and homogenization}
\label{sec:alphaG2}

When $\alpha>2$, $\delta=\frac{\alpha}{2}$. By Feynman-Kac formula, the solution is written as
\begin{equation}
u_\eps(t,x)=\E_B\{f(x+B_t)\exp(i\eps^{-\alpha/2}\int_0^t V(s/\eps^\alpha,(x+B_s)/\eps)ds)\}.
\end{equation}
By the scaling property of Brownian motion, stationarity of $V$, and a change of parameter $\eps^{\alpha}\mapsto \eps^2$, we have
\begin{equation}
u_\eps(t,x)=\E_B\{f(x+\eps B_{t/\eps^2})\exp(i\eps\int_0^{t/\eps^2} V(s,\eps^\beta B_s)ds)\},
\end{equation}
with $\beta=1-\frac{2}{\alpha}\in (0,1)$. Since $\beta>0$, the spatial mixing from $V$ for the process $\eps\int_0^{t/\eps^2} V(s,\eps^\beta B_s)ds$ is small.

Let $\sigma=\sqrt{2\int_0^\infty R(t,0)dt}$. The goal in this section is to prove that $u_\eps(t,x)\to u_0(t,x)$ in probability with $u_0$ satisfying
\begin{equation}
\partial_t u_0(t,x)=\frac12\Delta u_0(t,x)-\frac12\sigma^2 u_0(t,x).
\end{equation}

The result comes from the following two propositions.

\begin{proposition}
$$(\eps B_{t/\eps^2}, \eps\int_0^{t/\eps^2} V(s,\eps^\beta B_s)ds)\Rightarrow (N^1_t,\sigma N^2_t),$$ where $N^1_t\sim N(0,tI_d)$, independent from $N^2_t\sim N(0,t)$. The weak convergence $\Rightarrow$ is in the annealed sense.
\label{prop:conExpg2}
\end{proposition}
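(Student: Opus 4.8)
The plan is to establish the joint weak convergence by the classical Cramér–Wold / characteristic function approach, combined with a mixing central limit theorem after conditioning on (freezing) the Brownian path. More precisely, I would prove that for every $\xi\in\R^d$ and $\eta\in\R$,
\begin{equation}
\E\E_B\Bigl\{\exp\bigl(i\xi\cdot\eps B_{t/\eps^2}+i\eta\,\eps\textstyle\int_0^{t/\eps^2}V(s,\eps^\beta B_s)\,ds\bigr)\Bigr\}\longrightarrow \E\Bigl\{\exp\bigl(i\xi\cdot N^1_t\bigr)\Bigr\}\cdot\exp\bigl(-\tfrac12\eta^2\sigma^2 t\bigr).
\end{equation}
The first step is to condition on the Brownian motion: write the joint expectation as $\E_B\bigl\{e^{i\xi\cdot\eps B_{t/\eps^2}}\,\E[e^{i\eta\,\eps\int_0^{t/\eps^2}V(s,\eps^\beta B_s)ds}\mid B]\bigr\}$. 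For a \emph{fixed} path, $s\mapsto V(s,\eps^\beta B_s)$ is a bounded process that is $\varphi$-mixing in the time variable (the spatial argument only enters as a frozen parameter along the path), so the inner conditional expectation should, by a standard CLT for mixing sequences (\cite[Chapter 4]{billingsley2009convergence}) applied on the long time horizon $[0,t/\eps^2]$, converge to $\exp\bigl(-\tfrac12\eta^2 v_\eps t\bigr)$ where $v_\eps=\eps^2\int_0^{t/\eps^2}\int_0^{t/\eps^2}\E\{V(s,\eps^\beta B_s)V(r,\eps^\beta B_r)\}\,ds\,dr$ is the (path-dependent) asymptotic variance.

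The second step is to identify the limit of $v_\eps$. Using the mixing bound $|\E\{V(s,x)V(r,y)\}|=|R(s-r,x-y)|\les \varphi^{1/2}(|s-r|)$, which is integrable in $s-r$, the double integral localizes near the diagonal $s\approx r$; on that diagonal $\eps^\beta(B_s-B_r)\to 0$ for typical increments, so $R(s-r,\eps^\beta(B_s-B_r))\approx R(s-r,0)$, and one gets $v_\eps\to 2\int_0^\infty R(\tau,0)\,d\tau=\sigma^2$ for $\E_B$-a.e.\ path. This is where the condition $\beta=1-2/\alpha>0$ (i.e.\ $\alpha>2$) is essential: it kills the spatial decorrelation and leaves only the temporal mixing. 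I would make the diagonal-localization estimate quantitative — split $\int\!\int_{|s-r|\le K}$ and $\int\!\int_{|s-r|>K}$, use $\varphi(r)\les 1\wedge r^{-n}$ on the tail, and use continuity of $R$ plus $\E_B|B_s-B_r|^2=|s-r|$ (so $\eps^{2\beta}|B_s-B_r|^2$ is small on the relevant range) on the main term — and then take expectations, so that $\E_B v_\eps\to\sigma^2$ together with an $L^1(\E_B)$ or bounded-convergence argument lets me pass the limit through $\E_B$. The third step is then routine: the Brownian scaling gives $\eps B_{t/\eps^2}\overset{d}{=}B_t$ for each fixed $\eps$, so $\E_B\{e^{i\xi\cdot\eps B_{t/\eps^2}}\cdot e^{-\frac12\eta^2 v_\eps t}\}\to e^{-\frac12|\xi|^2 t}e^{-\frac12\eta^2\sigma^2 t}$ by dominated convergence (the integrand is bounded by $1$), which is exactly the characteristic function of $(N^1_t,\sigma N^2_t)$ with the asserted independence.

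The main obstacle I anticipate is making the conditional CLT uniform enough in the Brownian path to justify interchanging the $\eps\to 0$ limit with $\E_B$. A pathwise CLT alone is not quite sufficient; I expect to need a quantitative version — e.g.\ a Berry–Esseen-type or fourth-moment/cumulant bound for the mixing sum $\eps\int_0^{t/\eps^2}V(s,\eps^\beta B_s)\,ds$ — giving a rate of convergence of the conditional characteristic function to $e^{-\frac12\eta^2 v_\eps t}$ that is controlled by path-independent mixing constants (using boundedness of $V$ and the compact support of $\varphi$), so that after bounding everything by $1$ the dominated convergence theorem applies. A clean way to organize this: show $\sup_{\text{paths}}\bigl|\E[e^{i\eta\eps\int_0^{t/\eps^2}V\,ds}\mid B]-e^{-\frac12\eta^2 v_\eps t}\bigr|\to 0$ by estimating cumulants of order $\ge 3$ of the frozen-path mixing integral (each such cumulant is $O(\eps)$ by the mixing decay, uniformly in the path since only $|R|\le\varphi^{1/2}$ enters), and separately show $\E_B|v_\eps-\sigma^2|\to 0$; combining these two uniform statements with Brownian scaling finishes the annealed convergence. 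Tightness is automatic here since we are proving convergence of characteristic functions of $\R^{d+1}$-valued random variables directly.
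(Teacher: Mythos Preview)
Your proposal is correct and follows essentially the same strategy as the paper: freeze the Brownian path, apply a temporal-mixing CLT to the frozen integral, identify the limiting variance as $\sigma^2$ using $\beta>0$, and pass the limit through $\E_B$ by dominated convergence. The paper (via the analogous proof of Proposition~\ref{prop:conVarg2}) organizes the conditional CLT through an explicit Bernstein block decomposition with Taylor expansion---which automatically yields the path-uniform bounds you worry about, since all estimates depend only on $\|V\|_\infty$ and the mixing coefficient---whereas you propose to achieve the same uniformity via cumulant bounds; both are standard implementations of the same idea, and your anticipated obstacle is exactly the point the block method is designed to handle.
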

\begin{proposition}
For independent Brownian motions $B_t^1,B_t^2$, $$(\eps B_{t/\eps^2}^1, \eps B_{t/\eps^2}^2, \eps\int_0^{t/\eps^2} V(s,\eps^\beta B_s^1)ds-\eps\int_0^{t/\eps^2} V(s,\eps^\beta B_s^2)ds)\Rightarrow (N^1_t,N^2_t, \sqrt{2}\sigma N^3_t),$$ where $N^1_t, N^2_t\sim N(0,tI_d)$, $N^3_t\sim N(0,t)$, and they are independent. The weak convergence $\Rightarrow$ is in the annealed sense.
\label{prop:conVarg2}
\end{proposition}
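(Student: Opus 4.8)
The plan is to reduce this two-copy statement to the one-copy statement of Proposition \ref{prop:conExpg2} by exploiting the asymptotic independence of the two Brownian trajectories. First I would note that the pair $(B^1, B^2)$ of independent Brownian motions, after diffusive rescaling, converges to $(N^1_t, N^2_t)$ with $N^1, N^2$ independent $N(0,tI_d)$; this is immediate. The real content is the third coordinate, so the key step is to show that
$$
\Big(\eps\!\int_0^{t/\eps^2}\! V(s,\eps^\beta B^1_s)\,ds,\ \eps\!\int_0^{t/\eps^2}\! V(s,\eps^\beta B^2_s)\,ds\Big)\Rightarrow (\sigma N,\ \sigma N')
$$
with $N,N'$ independent, jointly with the rescaled endpoints. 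Granting this, the third coordinate of the proposition is $\sigma N - \sigma N' \sim \sqrt2\,\sigma\, N^3_t$, and one checks independence of $N^3$ from $(N^1,N^2)$ at the level of the joint characteristic function.

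The mechanism behind the asymptotic independence of the two scenery integrals is that two independent $d$-dimensional Brownian motions ($d\geq 3$) spend only a negligible fraction of time within distance $O(1)$ of each other after spatial rescaling by $\eps^\beta$; more precisely, since $V$ has finite range of dependence $M$, the contributions to the two integrals coming from disjoint space-time regions are independent, and the overlap region — the set of times $s\le t/\eps^2$ where $|\eps^\beta B^1_s - \eps^\beta B^2_s|\le \eps^\beta M$, i.e. $|B^1_s-B^2_s|\le M$ — has expected Lebesgue measure of order $\int_0^{t/\eps^2}\Pb(|B^1_s-B^2_s|\le M)\,ds \les \int_0^{t/\eps^2}(1\wedge s^{-d/2})\,ds = O(1)$ for $d\ge 3$. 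Multiplying by the prefactor $\eps$ and using the $L^2$ bound on $V$, the cross-correlation between the two $\eps$-scaled integrals is $O(\eps)\to 0$. The cleanest way to package this is via characteristic functions: freeze both Brownian paths, apply the CLT for $\varphi$-mixing sequences (exactly as in the proof of Proposition \ref{prop:conExpg2}, but now to the sum $\lambda_1 V(s,\eps^\beta B^1_s) + \lambda_2 V(s,\eps^\beta B^2_s)$ viewed as a function of the frozen paths), obtaining that conditionally on $B^1,B^2$ the pair is asymptotically Gaussian with covariance matrix
$\sigma^2 I_2 + (\text{cross terms})$, and then show the cross terms vanish in expectation by the occupation-time estimate above. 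Finally, unfreeze by dominated convergence, using that the limiting Gaussian characteristic function depends on the paths only through the (deterministic, by the ergodic/time-mixing averaging) limit.

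I expect the main obstacle to be making the ``freeze the paths, apply the mixing CLT, then unfreeze'' step fully rigorous in the joint setting: one must verify that the quenched variance $\eps^2\int_0^{t/\eps^2}\!\int_0^{t/\eps^2} R(|s-s'|,\eps^\beta(B^i_s - B^j_{s'}))\,ds\,ds'$ converges — to $\sigma^2 t = 2t\int_0^\infty R(u,0)\,du$ when $i=j$, and to $0$ when $i\neq j$ — in probability with respect to the Brownian motions, uniformly enough to upgrade the quenched CLT to an annealed one. The diagonal case $i=j$ is handled as in Proposition \ref{prop:conExpg2}: because $\beta>0$, $\eps^\beta(B^i_s - B^i_{s'})$ is small on the diagonal $|s-s'| = O(1)$ that dominates the $R$-integral by finite range in time, so $R(|s-s'|,\eps^\beta(\cdot))\to R(|s-s'|,0)$. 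The off-diagonal case $i\neq j$ is the new estimate and reduces to the occupation-time bound; I would also need a fourth-moment or tightness argument (again mirroring the corresponding lemma used for Proposition \ref{prop:conExpg2}) to control the error terms in the mixing CLT. The independence of $N^3$ from $(N^1,N^2)$ then follows formally from the block structure of the limiting covariance, exactly as the independence of $N^2$ from $N^1$ is obtained in Proposition \ref{prop:conExpg2}.
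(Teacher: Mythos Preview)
Your overall architecture matches the paper's proof: work with the joint characteristic function, freeze the Brownian paths, use temporal $\varphi$-mixing to reduce to a block product, Taylor-expand each block, and identify the limit by showing that the (path-dependent) quenched variance converges in probability---to $\sigma^2 t$ for each diagonal term and to $0$ for the cross term. The paper carries this out with an explicit block decomposition $I_k\cup J_k$ and the auxiliary Lemma~\ref{lem:conVarSigma}.

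There is, however, a concrete slip in your cross-term estimate. The field $V(s,\cdot)$ has finite range $M$ in its \emph{own} spatial variable, so $V(s,\eps^\beta B^1_s)$ and $V(u,\eps^\beta B^2_u)$ are correlated only when $|\eps^\beta B^1_s-\eps^\beta B^2_u|\le M$, i.e.\ $|B^1_s-B^2_u|\le M\eps^{-\beta}$, not $\le M$. With the correct threshold the expected occupation time is of order $\eps^{-2\beta}$ (not $O(1)$), and the cross covariance is
\[
\eps^2\int_0^{t/\eps^2}\!\!\int_0^{t/\eps^2} R\bigl(s-u,\eps^\beta(B^1_s-B^2_u)\bigr)\,ds\,du \;\lesssim\; \eps^{2-2\beta}\ \longrightarrow\ 0,
\]
because $\beta=1-\tfrac{2}{\alpha}<1$, \emph{not} because $d\ge 3$. (Indeed the paper notes that the $\alpha>2$ regime works in all dimensions.) So your occupation-time heuristic is salvageable, but the mechanism you cite---transience of two independent Brownian motions in $d\ge 3$---is the wrong one here. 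The paper sidesteps this entirely: it bounds the cross term via the Fourier computation in Lemma~\ref{lem:conVarSigma}, using only $\beta\in(0,1)$ and dominated convergence.
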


By Proposition \ref{prop:conExpg2}, we have $\E\{u_\eps(t,x)\}\to u_0(t,x)$; by Proposition \ref{prop:conVarg2}, we have $\E\{|u_\eps(t,x)|^2\}\to |u_0(t,x)|^2$. So $u_\eps(t,x)\to u_0(t,x)$ in $L^2(\Omega)$.

We only prove Proposition \ref{prop:conVarg2}. The proof of Proposition \ref{prop:conExpg2} is similar with some simplifications.

\begin{proof}[Proof of Proposition \ref{prop:conVarg2}]
The goal is to show that for any $a,b\in \R^d,c\in \R$, as $\eps\to 0$
\begin{equation}
\E\E_B\{e^{ia\cdot \eps B_{t/\eps^2}^1+ib\cdot \eps B_{t/\eps^2}^2+ic(\eps\int_0^{t/\eps^2}V(s,\eps^\beta B_s^1)ds-\eps\int_0^{t/\eps^2}V(s,\eps^\beta B_s^2)ds)}\}\to e^{-\frac12|a|^2t-\frac12|b|^2t-c^2\sigma^2t}.
\end{equation}

We first consider the average with respect to the random environment. Let $$X_\eps(t)=\eps\int_0^{t/\eps^2}V(s,\eps^\beta B_s^1)ds-\eps\int_0^{t/\eps^2}V(s,\eps^\beta B_s^2)ds,$$ $\Delta t=\eps^{-\gamma_1}+\eps^{-\gamma_2}$, $0<\gamma_2<\gamma_1<2$ to be determined, and $N=[\frac{t}{\eps^2 \Delta t}]\sim t\eps^{\gamma_1-2}$. Define the intervals $I_k=[(k-1)\Delta t,(k-1)\Delta t+\eps^{-\gamma_1}]$ and $J_k=[(k-1)\Delta t+\eps^{-\gamma_1},k\Delta t]$ for $ k=1,\ldots, N$, we have
\begin{equation}
\begin{aligned}
X_\eps(t)=&\sum_{k=1}^N\int_{I_k}\eps (V(s,\eps^\beta B_s^1)-V(s,\eps^\beta B_s^2))ds\\
+&\sum_{k=1}^N \int_{J_k}\eps (V(s,\eps^\beta B_s^1)-V(s,\eps^\beta B_s^2))ds\\
+&\int_{N\Delta t}^{t/\eps^2}\eps (V(s,\eps^\beta B_s^1)-V(s,\eps^\beta B_s^2))ds:=(I)+(II)+(III).
\end{aligned}
\end{equation}

We show that for every realization of $B_s^1,B_s^2$, $\E\{|(II)|^2\}+\E\{|(III)|^2\}\to 0$. Take $(II)$ for example, we have
\begin{equation}
\E\{|(II)|^2\}\les \eps^2\sum_{m=1}^N\sum_{n=1}^N \int_{J_m}\int_{J_n}\sup_{x\in\R^d} |R(s-u,x)|dsdu.
\end{equation}
Since $\int_{\R}\sup_{x\in \R^d}|R(t,x)|dt<\infty$, for those terms when $m=n$, we have an order of $\eps^2N\eps^{-\gamma_2}\sim \eps^{\gamma_1-\gamma_2}\to 0$ as $\eps \to 0$. For the terms when $m\neq n$, we have $|s-u|\geq \eps^{-\gamma_1}$ if $s\in J_m, u\in J_n$, so $\sup_{x\in \R^d}|R(s-u,x)|\les \varphi^\frac12(\eps^{-\gamma_1})$, and we can use the following crude bound
\begin{equation}
\eps^2\sum_{m\neq n} \int_{J_m}\int_{J_n}\sup_{x\in\R^d} |R(s-u,x)|dsdu\leq \eps^2N^2\eps^{-2\gamma_2}\varphi^\frac12(\eps^{-\gamma_1})\les \eps^{2\gamma_1-2\gamma_2+\gamma_1\lambda-2},
\end{equation}
if $\varphi^\frac12(r)\les r^{-\lambda}$. Since $\lambda$ can be sufficiently large(e.g. $\lambda>2/\gamma_1$), we have
$$\eps^2\sum_{m\neq n} \int_{J_m}\int_{J_n}\sup_{x\in\R^d} |R(s-u,x)|dsdu\to 0$$
as $\eps\to 0$. Therefore, $\E\{|(II)|^2\}\to 0$. Similar discussion holds for $(III)$. Now we have
\begin{equation}
\E\{|e^{icX_\eps(t)}-e^{ic(I)}|\}\to 0,
\end{equation}
so
\begin{equation}
\lim_{\eps\to 0}|\E\E_B\{e^{ia\cdot \eps B_{t/\eps^2}^1+ib\cdot \eps B_{t/\eps^2}^2+icX_\eps(t)}\}-\E\E_B\{e^{ia\cdot \eps B_{t/\eps^2}^1+ib\cdot \eps B_{t/\eps^2}^2+ic(I)}\}|=0.
\end{equation}

Next, we consider $(I)=\sum_{k=1}^N\int_{I_k}\eps (V(s,\eps^\beta B_s^1)-V(s,\eps^\beta B_s^2))ds=\sum_{k=1}^N\eps^{1-\frac{\gamma_1}{2}}Y_k^\eps$, with
\begin{equation}
Y_k^\eps:=\int_{I_k}\eps^{\frac{\gamma_1}{2}} (V(s,\eps^\beta B_s^1)-V(s,\eps^\beta B_s^2))ds,
\end{equation}
Let $t_k=(k-1)\Delta t+\eps^{-\gamma_1}, k=1,\ldots, N$, then
\begin{equation}
\E\{e^{ic(I)}\}=\E\{e^{ic \sum_{k=1}^N\eps^{1-\frac{\gamma_1}{2}}Y_k^\eps}\}=\E\{\E\{e^{ic\sum_{k=1}^N\eps^{1-\frac{\gamma_1}{2}}Y_k^\eps}|\F_{t_{N-1}}\}\},
\end{equation}
and
\begin{equation}
\E\{e^{ic\sum_{k=1}^N\eps^{1-\frac{\gamma_1}{2}}Y_k^\eps}|\F_{t_{N-1}}\}=e^{ic \sum_{k=1}^{N-1}\eps^{1-\frac{\gamma_1}{2}}Y_k^\eps}\E\{e^{ic \eps^{1-\frac{\gamma_1}{2}}Y_N^\eps}|\F_{t_{N-1}}\}.
\end{equation}
When freezing the Brownian motions, $Y_N^\eps$ is $\F^{(N-1)\Delta t}$-measurable. Since $(N-1)\Delta t-t_{N-1}=\eps^{-\gamma_2}$ and $e^{ic \eps^{1-\frac{\gamma_1}{2}}Y_N^\eps}$ is uniformly bounded by $1$, we have by Lemma \ref{lem:condiMixing} that
\begin{equation}
|\E\{e^{ic \eps^{1-\frac{\gamma_1}{2}}Y_N^\eps}|\F_{t_{N-1}}\}
-\E\{
e^{ic \eps^{1-\frac{\gamma_1}{2}}Y_N^\eps}\}|\leq 2 \varphi(\eps^{-\gamma_2})\to 0.
\label{eq:useMixing}
\end{equation}
Therefore we obtain
\begin{equation}
\lim_{\eps \to 0}\left(\E\{e^{ic \sum_{k=1}^{N}\eps^{1-\frac{\gamma_1}{2}}Y_k^\eps}\}-\E\{e^{ic \sum_{k=1}^{N-1}\eps^{1-\frac{\gamma_1}{2}}Y_k^\eps}\}\E\{
e^{ic \eps^{1-\frac{\gamma_1}{2}}Y_N^\eps}\}\right)=0.
\end{equation}
Iterating the above procedure, in the end we have
\begin{equation}
\lim_{\eps\to 0}
\left(\E\{e^{ic\sum_{k=1}^N\eps^{1-\frac{\gamma_1}{2}}Y_k^\eps}\}-\prod_{k=1}^N \E\{e^{ic\eps^{1-\frac{\gamma_1}{2}}Y_k^\eps}\}\right)=0.
\end{equation}

Now we consider $\E\{e^{ic\eps^{1-\frac{\gamma_1}{2}}Y_k^\eps}\}$. By Taylor expansion, we have $|e^{ix}-1-ix+\frac12x^2|\leq C|x|^3$, so
\begin{equation}
\E\{e^{ic \eps^{1-\frac{\gamma_1}{2}}Y_k^\eps}\}=1-\frac12c^2\eps^{2-\gamma_1}\E\{(Y_k^\eps)^2\}+\eps^{2-\gamma_1}O(\eps^{1-\frac{\gamma_1}{2}}|\int_{I_k}\eps^{\frac{\gamma_1}{2}}(V(s,\eps^\beta B_s^1)-V(s,\eps^\beta B_s^2))ds|^3).
\end{equation}
Since $V$ is uniformly bounded, we have
\begin{equation}
\eps^{1-\frac{\gamma_1}{2}}|\int_{I_k}\eps^{\frac{\gamma_1}{2}}(V(s,\eps^\beta B_s^1)-V(s,\eps^\beta B_s^2))ds|^3
 \les \eps^{1-\frac{\gamma_1}{2}}\eps^{-\frac{3\gamma_1}{2}}=\eps^{1-2\gamma_1}\to 0,
 \end{equation}
if $\gamma_1<\frac12$. Then we obtain
\begin{equation}
\E\{e^{ic \eps^{1-\frac{\gamma_1}{2}}Y_k^\eps}\}=1-\frac12c^2\eps^{2-\gamma_1}\E\{(Y_k^\eps)^2\}+o(\eps^{2-\gamma_1}).
\end{equation}

Now we have
\begin{equation}
\prod_{k=1}^N\E\{e^{ic\eps^{1-\frac{\gamma_1}{2}}Y_k^\eps}\}=\prod_{k=1}^N\left(1-\frac12c^2\eps^{2-\gamma_1}\E\{(Y_k^\eps)^2\}+o(\eps^{2-\gamma_1})\right)=e^{\sum_{k=1}^N\log\left(1-\frac12c^2\eps^{2-\gamma_1}\E\{(Y_k^\eps)^2\}+o(\eps^{2-\gamma_1})\right)},
\end{equation}
and we claim that
\begin{equation}
\prod_{k=1}^N\E\{e^{ic\eps^{1-\frac{\gamma_1}{2}}Y_k^\eps}\}\to e^{-c^2\sigma^2 t}
\label{eq:conVarSigma}
\end{equation}
in probability. If this is true, we have
\begin{equation}
\begin{aligned}
\lim_{\eps\to 0}\E\E_B\{e^{ia\cdot \eps B_{t/\eps^2}^1+ib\cdot \eps B_{t/\eps^2}^2+icX_\eps(t)}\}=&\lim_{\eps\to 0}\E_B\{e^{ia\cdot \eps B_{t/\eps^2}^1+ib\cdot \eps B_{t/\eps^2}^2}\prod_{k=1}^N\E\{e^{ic\eps^{1-\frac{\gamma_1}{2}}Y_k^\eps}\}\}\\
=&\lim_{\eps\to 0}\E_B\{e^{ia\cdot \eps B_{t/\eps^2}^1+ib\cdot \eps B_{t/\eps^2}^2}e^{-c^2\sigma^2t}\}\\
=&e^{-\frac12|a|^2t-\frac12|b|^2t-c^2\sigma^2t},
\end{aligned}
\end{equation}
and the proof is complete.

To prove \eqref{eq:conVarSigma}, we consider $$\log\left(1-\frac12c^2\eps^{2-\gamma_1}\E\{(Y_k^\eps)^2\}+o(\eps^{2-\gamma_1})\right)=\log\left(1-\frac12c^2\eps^{2-\gamma_1}(\E\{(Y_k^\eps)^2\}+o(1))\right),$$
where $o(1)$ is uniformly bounded, independent of $k$, and $o(1)\to 0$ as $\eps\to 0$. In addition,
\begin{equation}
\E\{(Y_k^\eps)^2\}\les \eps^{\gamma_1}\int_{I_k^2}\sup_{x\in \R^d}|R(s-u,x)|dsdu
\end{equation}
is uniformly bounded. By Taylor expansion, we have
\begin{equation}
\begin{aligned}
 \sum_{k=1}^N\log\left(1-\frac12c^2\eps^{2-\gamma_1}(\E\{(Y_k^\eps)^2\}+o(1))\right)=&\sum_{k=1}^N\left(-\frac12c^2\eps^{2-\gamma_1}(\E\{(Y_k^\eps)^2\}+o(1))\right)\\+&\sum_{k=1}^NO\left(\left(\frac12c^2\eps^{2-\gamma_1}(\E\{(Y_k^\eps)^2\}+o(1))\right)^2\right).
\end{aligned}
\end{equation}
Clearly
$\sum_{k=1}^NO(\left(\frac12c^2\eps^{2-\gamma_1}(\E\{(Y_k^\eps)^2\}+o(1))\right)^2)\to 0$ in probability, and we only need to consider
\begin{equation}
(*)=\sum_{k=1}^N \eps^{2-\gamma_1}\E\{( Y_k^\eps)^2\},
\end{equation}
and prove $(*)\to 2\sigma^2t$ in probability. Actually, we have
\begin{equation}
(*)-2\sigma^2t =\sum_{k=1}^N\eps^{2-\gamma_1}(\E\{( Y_k^\eps)^2\}-2\sigma^2)+2\sigma^2(N\eps^{2-\gamma_1}-t).
\end{equation}
The second part goes to zero as $\eps \to 0$. For the first part,  we note that
\begin{equation}
\begin{aligned}
\E\{(Y_k^\eps)^2\}=&\eps^{\gamma_1}\int_{I_k^2}R(s-u,\eps^\beta(B_s^1-B_u^1))dsdu\\
+&\eps^{\gamma_1}\int_{I_k^2}R(s-u,\eps^\beta(B_s^2-B_u^2))dsdu\\
-&2\eps^{\gamma_1}\int_{I_k^2}R(s-u,\eps^\beta(B_s^1-B_u^2))dsdu.
\end{aligned}
\end{equation}
By Lemma \ref{lem:conVarSigma}, we have $\E_B\{(\eps^{\gamma_1}\int_{I_k^2}R(s-u,\eps^\beta(B_s^i-B_u^i))dsdu-\sigma^2)^2\}\to 0$ for $i=1,2$, and clearly it is independent of $k$. Now we only have to consider
\begin{equation}
\sum_{k=1}^N\eps^{2-\gamma_1}\eps^{\gamma_1}\int_{I_k^2}|R(s-u,\eps^\beta(B_s^1-B_u^2))|dsdu\leq \eps^2\int_0^{t/\eps^2}\int_0^{t/\eps^2}|R(s-u,\eps^\beta(B_s^1-B_u^2))|dsdu.
\end{equation}
Again by Lemma \ref{lem:conVarSigma}, $\eps^2\int_0^{t/\eps^2}\int_0^{t/\eps^2}|R(s-u,\eps^\beta(B_s^1-B_u^2))|dsdu\to 0$ in probability. The proof is complete.
\end{proof}

\section{$\alpha\in [0,2]$: spatial mixing and homogenization}
\label{sec:alphaS2}
\subsection{Kipnis-Varadhan's method}

When $\alpha\leq 2$, we use a different approach, which makes use of the mixing from Brownian motions.

By Feynman-Kac formula, the solution is written as
\begin{equation}
u_\eps(t,x)=\E_B\{f(x+B_t)\exp(i\eps^{-1}\int_0^t V(s/\eps^\alpha,(x+B_s)/\eps)ds)\}.
\end{equation}
By the scaling property of the Brownian motion and stationarity of $V$, we only need to consider
\begin{equation}
u_\eps(t,x)=\E_B\{f(x+\eps B_{t/\eps^2})\exp(i\eps\int_0^{t/\eps^2} V(\eps^{2-\alpha}s,B_s)ds)\}.
\end{equation}

Recall that $V(t,x)=\V(\tau_{(t,x)}\omega)$, define the environmental process $y^\eps_s=\tau_{(\eps^{2-\alpha}s,B_s)}\omega$ taking values in $\Omega$. By Lemma \ref{lem:enviPro}, for fixed $\eps>0$, it is a stationary Markov process, ergodic with respect to the invariant measure $\Pb$. By a straightforward calculation, the generator is $L=\eps^{2-\alpha}D_0+\frac12\sum_{k=1}^dD_k^2$. Now the Brownian motion in random scenery can be rewritten as $X_\eps(t):=\eps\int_0^{t/\eps^2}\V(y_s^\eps)ds$, i.e., an additive functional of a stationary, ergodic Markov process. The Kipnis-Varadhan's method involves constructing a corrector function $\Phi_\lambda$ and applying a martingale decomposition as follows.

Define the corrector function $\Phi_\lambda$ with $\lambda=\eps^2$ as
\begin{equation}
(\lambda-L)\Phi_\lambda=\V.
\end{equation}
By It\^o's formula, $X_\eps(t)=\eps\int_0^{t/\eps^2}\V(y_s^\eps)ds=R_t^\eps+M_t^\eps$ with
\begin{eqnarray}
R_t^\eps&=&\eps\int_0^{t/\eps^2}\lambda\Phi_\lambda(y_s^\eps)ds-\eps\Phi_\lambda(y_{t/\eps^2}^\eps)+\eps\Phi_\lambda(y_0^\eps),\\
M_t^\eps&=&\sum_{k=1}^d\eps\int_0^{t/\eps^2}D_k\Phi_\lambda(y_s^\eps)dB_s^k.
\end{eqnarray}

To show the distribution of $X_\eps(t)$ is close to a normal distribution, the idea is to prove $R_t^\eps$ is small and the martingale $M_t^\eps$ is close to a Brownian motion. We first have the following lemma.
\begin{lemma}
\begin{equation}
\lambda\langle \Phi_\lambda,\Phi_\lambda\rangle\to 0
\end{equation}
as $\lambda\to 0$.
\label{lem:smallCorrector}
\end{lemma}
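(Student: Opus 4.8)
The plan is to use the variational (Dirichlet form) characterization of the resolvent and a spectral representation in terms of the power spectrum $\hat R$. First I would recall that $\Phi_\lambda = (\lambda - L)^{-1}\V$ with $L = \eps^{2-\alpha}D_0 + \tfrac12\sum_{k=1}^d D_k^2$, where $\lambda = \eps^2$, and that $-L$ is a nonnegative self-adjoint operator on $L^2(\Omega)$ once we observe that $D_0$ is skew-adjoint (it generates a unitary group in the time direction) while $\tfrac12\sum D_k^2$ is nonpositive self-adjoint; write $L = A - S$ with $A = \eps^{2-\alpha}D_0$ skew-adjoint and $S = -\tfrac12\sum_{k=1}^d D_k^2 \ge 0$. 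Pairing the defining equation with $\Phi_\lambda$ and taking real parts kills the skew-adjoint piece, giving the energy identity
\begin{equation}
\lambda\langle\Phi_\lambda,\Phi_\lambda\rangle + \langle S\Phi_\lambda,\Phi_\lambda\rangle = \mathrm{Re}\,\langle\V,\Phi_\lambda\rangle \le \|\V\|\,\langle\Phi_\lambda,\Phi_\lambda\rangle^{1/2},
\end{equation}
which already yields the a priori bounds $\lambda\langle\Phi_\lambda,\Phi_\lambda\rangle \le \|\V\|^2$ and $\langle S\Phi_\lambda,\Phi_\lambda\rangle \le \|\V\|^2$, uniformly in $\lambda$. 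These crude bounds are not enough by themselves; the point is to upgrade the first one to $o(1)$.

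Next I would diagonalize simultaneously: using the projection valued measure $U(d\xi_0,d\xi)$ from the spectral resolution of $T_{(t,x)}$, we have $D_0 = \int i\xi_0\,U$, $D_k = \int i\xi_k\,U$, hence $A = \int i\eps^{2-\alpha}\xi_0\,U$ and $S = \int \tfrac12|\xi|^2\,U$, so that
\begin{equation}
\lambda\langle\Phi_\lambda,\Phi_\lambda\rangle = \int_{\R^{d+1}} \frac{\lambda}{(\lambda + \tfrac12|\xi|^2)^2 + (\eps^{2-\alpha}\xi_0)^2}\,\langle U(d\xi_0,d\xi)\V,\V\rangle = \frac{1}{(2\pi)^{d+1}}\int_{\R^{d+1}} \frac{\lambda\,\hat R(\xi_0,\xi)}{(\lambda + \tfrac12|\xi|^2)^2 + (\eps^{2-\alpha}\xi_0)^2}\,d\xi_0\,d\xi.
\end{equation}
Now with $\lambda = \eps^2$ the integrand is dominated by $\hat R(\xi_0,\xi)/(\tfrac12|\xi|^2)$, and the key analytic input is that this dominating function is integrable: since $d \ge 3$, the singularity $|\xi|^{-2}$ is integrable near $\xi = 0$, and (because $V$ has finite range of dependence and is bounded) $\hat R$ is smooth and rapidly decaying, so $\int \hat R(\xi_0,\xi)|\xi|^{-2}\,d\xi_0\,d\xi < \infty$. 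Given the dominating function, the integrand converges pointwise to $0$ as $\eps\to 0$ (for each fixed $(\xi_0,\xi)$ with $\xi\neq 0$ the numerator $\lambda = \eps^2 \to 0$ while the denominator stays bounded below by $(\tfrac12|\xi|^2)^2 > 0$), so dominated convergence gives $\lambda\langle\Phi_\lambda,\Phi_\lambda\rangle \to 0$.

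The main obstacle, and the step that needs the most care, is justifying the integrability of the dominating function $\hat R(\xi_0,\xi)/(\tfrac12|\xi|^2)$ near $\xi = 0$ — this is exactly where the hypothesis $d \ge 3$ enters, and it is the mechanism by which the spatial structure of the Brownian motion regularizes the corrector (the case $\delta = \tfrac{\alpha}{2}\vee 1$ with $\alpha \le 2$ forces $\delta = 1$, matching the scaling above). One must check that $\hat R$ is bounded near the origin; this follows from $R(t,x) = \E\{V(t,x)V(0,0)\}$ being integrable in $(t,x)$ (finite range of dependence plus boundedness of $V$), so that $\hat R = \widehat{R}$ is continuous and bounded, hence $\hat R(\xi_0,\xi)|\xi|^{-2}$ is locally integrable in dimension $d+1$ with $d\ge 3$; decay at infinity is likewise inherited from smoothness of $R$. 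An alternative, more hands-on route that avoids the spectral calculus would be to write $\langle\Phi_\lambda,\Phi_\lambda\rangle$ directly via the Feynman–Kac / resolvent formula $\Phi_\lambda = \int_0^\infty e^{-\lambda s}\E_B\{\V(\tau_{(\eps^{2-\alpha}s,B_s)}\cdot)\}\,ds$ and estimate the resulting double time integral of $R(\eps^{2-\alpha}(s-u), B_s - B_u)$ using the transition density bound $\E_B|R(\cdot,B_s-B_u)| \les (s-u)^{-d/2}\wedge 1$, again invoking $d \ge 3$ for integrability at large times; but the spectral argument is cleaner and I would present that as the primary proof.
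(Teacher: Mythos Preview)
Your proposal is correct and follows essentially the same route as the paper: you compute $\lambda\langle\Phi_\lambda,\Phi_\lambda\rangle$ via the spectral representation, obtain the identical integral $\frac{1}{(2\pi)^{d+1}}\int \frac{\lambda\hat R(\xi_0,\xi)}{(\lambda+\tfrac12|\xi|^2)^2+\eps^{4-2\alpha}\xi_0^2}\,d\xi_0\,d\xi$, dominate by $\hat R(\xi_0,\xi)/|\xi|^2$, and invoke $d\ge 3$ plus dominated convergence. The preliminary energy identity and the alternative Feynman--Kac sketch you include are extra scaffolding the paper omits, and your claim that $\hat R$ is ``rapidly decaying'' is stronger than what is actually used or needed (bounded and integrable suffices, which is all the paper asserts), but none of this affects the validity of the argument.
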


\begin{proof}
By spectral representation, $\Phi_\lambda$ is written as
\begin{equation}
\Phi_\lambda=\int_{\R^{d+1}}\frac{1}{\lambda+\frac12|\xi|^2-i\xi_0\eps^{2-\alpha}}U(d\xi_0,d\xi)\V,
\end{equation}
so
\begin{equation}
\lambda\langle \Phi_\lambda,\Phi_\lambda\rangle=\frac{1}{(2\pi)^{d+1}}\int_{\R^{d+1}}\frac{\lambda\hat{R}(\xi_0,\xi)}{(\lambda+\frac12|\xi|^2)^2+\eps^{4-2\alpha}\xi_0^2}d\xi_0d\xi.
\end{equation}
Clearly $\frac{\lambda\hat{R}(\xi_0,\xi)}{(\lambda+\frac12|\xi|^2)^2+\eps^{4-2\alpha}\xi_0^2}\les \frac{\hat{R}(\xi_0,\xi)}{|\xi|^2}$. Since $\hat{R}$ is bounded and integrable, and $\frac{1}{|\xi|^2}$ is integrable around the origin when $d\geq 3$, by the dominated convergence theorem, the proof is complete.
\end{proof}

By Lemma \ref{lem:smallCorrector}, we have
\begin{equation}
\E\E_B\{|R_t^\eps|^2\}\les \lambda\langle \Phi_\lambda,\Phi_\lambda\rangle\to 0
\end{equation}
as $\eps \to 0$.

To deal with the martingale $M_t^\eps$, we distinguish between $\alpha=2$ and $\alpha<2$.

\subsection{$\alpha=2$: ergodicity}

Let $\eta_k=\int_{\R^{d+1}}\frac{i\xi_k}{\frac12|\xi|^2-i\xi_0}U(d\xi_0,d\xi)\V\in L^2(\Omega)$, $k=1,\ldots,d$, we have
\begin{lemma}
$D_k\Phi_\lambda\to \eta_k$ in $L^2(\Omega)$, $k=1,\ldots,d$.
\label{lem:conGradient}
\end{lemma}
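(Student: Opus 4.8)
The plan is to argue exactly as in the proof of Lemma~\ref{lem:smallCorrector}: pass to the spectral representation of $\Phi_\lambda$, write $\|D_k\Phi_\lambda-\eta_k\|^2$ as an integral over $\R^{d+1}$ against the power spectrum, and then invoke the dominated convergence theorem.

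Concretely, since $\alpha=2$ the generator is $L=D_0+\tfrac12\sum_{k=1}^d D_k^2$, so the spectral resolution $T_{(t,x)}=\int e^{i\xi_0 t+i\xi\cdot x}U(d\xi_0,d\xi)$ turns $(\lambda-L)\Phi_\lambda=\V$ into
\[
\Phi_\lambda=\int_{\R^{d+1}}\frac{U(d\xi_0,d\xi)\V}{\lambda+\tfrac12|\xi|^2-i\xi_0},
\qquad
D_k\Phi_\lambda=\int_{\R^{d+1}}\frac{i\xi_k\,U(d\xi_0,d\xi)\V}{\lambda+\tfrac12|\xi|^2-i\xi_0}.
\]
Subtracting the given spectral formula for $\eta_k$ and using $\langle U(d\xi_0,d\xi)\V,\V\rangle=(2\pi)^{-(d+1)}\hat R(\xi_0,\xi)\,d\xi_0 d\xi$ yields
\[
\|D_k\Phi_\lambda-\eta_k\|^2=\frac{1}{(2\pi)^{d+1}}\int_{\R^{d+1}}\frac{\xi_k^2\,\lambda^2\,\hat R(\xi_0,\xi)}{\big((\lambda+\tfrac12|\xi|^2)^2+\xi_0^2\big)\big(\tfrac14|\xi|^4+\xi_0^2\big)}\,d\xi_0\, d\xi,
\]
so the lemma reduces to showing this integral tends to $0$ as $\lambda\to0$.

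Next I would produce a $\lambda$-independent integrable majorant and check pointwise convergence. From $\tfrac14|\xi|^4+\xi_0^2\ge\tfrac14|\xi|^4$ one gets $\xi_k^2/(\tfrac14|\xi|^4+\xi_0^2)\le 4/|\xi|^2$; since $\hat R$ is bounded and integrable and $|\xi|^{-2}$ is locally integrable at the origin for $d\ge3$, this simultaneously shows $\eta_k$ is a well-defined element of $L^2(\Omega)$. Moreover $(\lambda+\tfrac12|\xi|^2)^2+\xi_0^2\ge\lambda^2$, hence $\lambda^2/\big((\lambda+\tfrac12|\xi|^2)^2+\xi_0^2\big)\le1$, so the integrand above is dominated by $4\hat R(\xi_0,\xi)/|\xi|^2$, which is integrable and independent of $\lambda$. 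For fixed $(\xi_0,\xi)$ with $\xi\ne0$ the numerator $\xi_k^2\lambda^2\to0$ while the denominator stays bounded below, and for $\xi=0$ the integrand vanishes identically; thus the integrand converges to $0$ for a.e.\ $(\xi_0,\xi)$, and dominated convergence gives the claim.

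I do not expect a genuine obstacle: the only delicate point is integrability near $\xi=0$, which is exactly where $d\ge3$ enters, just as in Lemma~\ref{lem:smallCorrector}. It is worth recording where $\alpha=2$ is used: this is precisely the value for which the factor $\eps^{2-\alpha}$ multiplying $\xi_0$ equals $1$, so that the limiting symbol $\tfrac12|\xi|^2-i\xi_0$ defining $\eta_k$ is $\eps$-independent and the difference of resolvents collapses cleanly; for $\alpha<2$ the coefficient $\eps^{2-\alpha}\to\infty$ and a different limit, handled by the quantitative martingale CLT, is required.
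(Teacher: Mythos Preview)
Your proof is correct and essentially identical to the paper's: both pass to the spectral representation, obtain the same integral expression for $\|D_k\Phi_\lambda-\eta_k\|^2$, and conclude via dominated convergence with a majorant of the form $\hat R(\xi_0,\xi)/|\xi|^2$ (the paper stops at $\frac{|\xi|^2\hat R}{\frac14|\xi|^4+\xi_0^2}$, which your bound $4\hat R/|\xi|^2$ dominates in turn). Your added remarks on why $\eta_k\in L^2(\Omega)$ and on the role of $\alpha=2$ are accurate and helpful.
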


\begin{proof}
Since
\begin{equation}
D_k\Phi_\lambda=\int_{\R^{d+1}}\frac{i\xi_k}{\lambda+\frac12|\xi|^2-i\xi_0}U(d\xi_0,d\xi)\V,
\end{equation}
we have
\begin{equation}
\begin{aligned}
\| D_k\Phi_\lambda-\eta_k\|^2=&\frac{1}{(2\pi)^{d+1}}\int_{\R^{d+1}}\frac{\lambda^2\xi_k^2\hat{R}(\xi_0,\xi)}{((\lambda+\frac12|\xi|^2)^2+\xi_0^2)(\frac14|\xi|^4+\xi_0^2)}d\xi_0d\xi\\
\les &\int_{\R^{d+1}}\frac{\lambda^2}{(\lambda+\frac12|\xi|^2)^2+\xi_0^2}\frac{|\xi|^2\hat{R}(\xi_0,\xi)}{\frac14|\xi|^4+|\xi_0|^2}d\xi_0d\xi\to 0
\end{aligned}
\end{equation}
by the dominated convergence theorem.
\end{proof}

Let $M_t=\sum_{k=1}^d \eps\int_0^{t/\eps^2}\eta_k(y_s^\eps)dB_s^k$ and $$\sigma^2=\sum_{k=1}^d\|\eta_k\|^2=\frac{1}{(2\pi)^{d+1}}\int_{\R^{d+1}}\frac{|\xi|^2\hat{R}(\xi_0,\xi)}{\frac14|\xi|^4+\xi_0^2}d\xi_0d\xi=2\int_0^\infty \E_B\{R(t,B_t)\}dt,$$
and since $u_0$ solves $\partial_t u_0(t,x)=\frac12\Delta u_0(t,x)-\frac12\sigma^2 u_0(t,x)$, i.e.,
\begin{equation}
u_0(t,x)=\E_B\{f(x+\eps B_{t/\eps^2})\exp(-\frac12\sigma^2 t)\},
\end{equation}
we have the decomposition of the error
\begin{equation}
\begin{aligned}
u_\eps(t,x)-u_0(t,x)=&\E_B\{f(x+\eps B_{t/\eps^2})\exp(iX_\eps(t))\}-\E_B\{f(x+\eps B_{t/\eps^2})\exp(i M_t^\eps)\}\\
+&\E_B\{f(x+\eps B_{t/\eps^2})\exp(iM_t^\eps)\}-\E_B\{f(x+\eps B_{t/\eps^2})\exp(iM_t)\}\\
+&\E_B\{f(x+\eps B_{t/\eps^2})\exp(i M_t)\}-\E_B\{f(x+\eps B_{t/\eps^2})\exp(-\frac12\sigma^2 t)\}\\
:=&(I)+(II)+(III).
\end{aligned}
\end{equation}
Clearly $\E\{|(I)|\}\les \E\E_B\{|R_t^\eps|\}\les \sqrt{\lambda\langle\Phi_\lambda,\Phi_\lambda\rangle}\to 0$, and
\begin{equation}
\E\{|(II)|\}\les \sqrt{\E\E_B\{|M_t^\eps-M_t|^2\}}=\sqrt{\sum_{k=1}^d \|D_k\Phi_\lambda-\eta_k\|^2t}\to 0.
\end{equation}

For $(III)$, we note that $M_t=\sum_{k=1}^d \eps\int_0^{t/\eps^2}\eta_k(y_s^\eps)dB_s^k$ is a square-integrable martingale for almost every $\omega\in \Omega$, and when $\alpha=2$, $y_s^\eps=\tau_{(s,B_s)}\omega$ is ergodic. By martingale central limit theorem, ergodicity of $y_s^\eps$, and the fact that $\E\{\eta_k\}=0,k=1,\ldots,d$, we obtain that for almost every $\omega$
\begin{equation}
(\eps B_{t/\eps^2}, M_t)\Rightarrow (W^1_t, \sigma W^2_t),
\label{eq:wkConEr}
\end{equation}
with $W^1,W^2$ independent Brownian motions. Thus $(III)\to 0$ almost surely. By the dominated convergence theorem we have $\E\{|(III)|\}\to 0$ as $\eps \to 0$.

To summarize, $\E\{|u_\eps(t,x)-u_0(t,x)|\}\to 0$ as $\eps \to 0$. The proof of the case $\alpha=2$ is complete.

\begin{remark}
It is worth noting that for the case $\alpha=2$, the mixing property in Assumption \ref{ass:mixing} is not used. All we need is the ergodicity and certain integrability condition of $\hat{R}(\xi_0,\xi)$.
\end{remark}

%\begin{remark}
%For Lemma \ref{lem:conGradient} to hold, we only need the condition that $\sigma^2<\infty$, i.e., the integrability of $|\xi|^2\hat{R}(\xi_0,\xi)(\frac14|\xi|^2+|\xi_0|^2)^{-1}$. However, for Lemma \ref{lem:smallCorrector} to hold when $\alpha=2$, finiteness of $\sigma^2$ seems not enough.
%\end{remark}

\subsection{$\alpha\in [0,2)$: quantitative martingale central limit theorem}

In this regime, $y_s^\eps=\tau_{(\eps^{2-\alpha}s,B_s)}\omega$ is $\eps-$dependent, so an ergodicity does not seem to establish \eqref{eq:wkConEr}. We apply a quantitative martingale central limit theorem instead. A fourth moment estimation appears in the proof, for which we use the mixing property of $V$.

We define $\sigma_\lambda^2=\sum_{k=1}^d \|D_k\Phi_\lambda\|^2$, and \begin{equation}
\sigma^2=\frac{4}{(2\pi)^{d+1}}\int_{\R^{d+1}}\frac{\hat{R}(\xi_0,\xi)}{|\xi|^{2}}d\xi_0d\xi=2\int_0^\infty \E_B\{R(0,B_t)\}dt,
\end{equation}
the following lemma holds.

\begin{lemma}
$\sigma_\lambda^2\to \sigma^2$ as $\lambda\to 0$.
\end{lemma}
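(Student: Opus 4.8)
The plan is to compute $\sigma_\lambda^2 = \sum_{k=1}^d \|D_k \Phi_\lambda\|^2$ via the spectral representation, exactly as in the proofs of Lemmas \ref{lem:smallCorrector} and \ref{lem:conGradient}, and then pass to the limit $\lambda \to 0$ by dominated convergence. Recall that with $\lambda = \eps^2$ the corrector solves $(\lambda - L)\Phi_\lambda = \V$ with $L = \eps^{2-\alpha}D_0 + \frac12\sum_{k=1}^d D_k^2$, so by spectral calculus
\begin{equation}
D_k\Phi_\lambda = \int_{\R^{d+1}} \frac{i\xi_k}{\lambda + \frac12|\xi|^2 - i\xi_0\eps^{2-\alpha}} U(d\xi_0,d\xi)\V,
\end{equation}
and therefore
\begin{equation}
\sigma_\lambda^2 = \sum_{k=1}^d \|D_k\Phi_\lambda\|^2 = \frac{1}{(2\pi)^{d+1}}\int_{\R^{d+1}} \frac{|\xi|^2\,\hat{R}(\xi_0,\xi)}{(\lambda + \frac12|\xi|^2)^2 + \eps^{4-2\alpha}\xi_0^2}\, d\xi_0\, d\xi.
\end{equation}

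The next step is to identify the pointwise limit of the integrand as $\eps \to 0$ (equivalently $\lambda = \eps^2 \to 0$). Since $\alpha < 2$, we have $\eps^{4-2\alpha} \to 0$, so for a.e. $(\xi_0,\xi)$ with $\xi \neq 0$ the integrand converges to $\frac{|\xi|^2 \hat{R}(\xi_0,\xi)}{\frac14|\xi|^4} = \frac{4\hat{R}(\xi_0,\xi)}{|\xi|^2}$, which integrates to the claimed $\sigma^2$; the identity $\sigma^2 = 2\int_0^\infty \E_B\{R(0,B_t)\}\,dt$ then follows by writing $\frac{1}{|\xi|^2} = \int_0^\infty e^{-\frac12|\xi|^2 t}\,dt$, Fubini, and the fact that $\E_B\{e^{i\xi\cdot B_t}\} = e^{-\frac12|\xi|^2 t}$ together with the spectral formula for $R$. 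For the domination, note that since both $\lambda \geq 0$ and $\eps^{4-2\alpha}\xi_0^2 \geq 0$, and $(\lambda + \frac12|\xi|^2)^2 \geq \frac14|\xi|^4$, the integrand is bounded above by $\frac{|\xi|^2\hat{R}(\xi_0,\xi)}{\frac14|\xi|^4} = \frac{4\hat{R}(\xi_0,\xi)}{|\xi|^2}$ uniformly in $\eps$; this dominating function is integrable because $\hat{R}$ is bounded and integrable and $|\xi|^{-2}$ is integrable near the origin in dimension $d \geq 3$ (the same argument used in Lemma \ref{lem:smallCorrector}). Dominated convergence then gives $\sigma_\lambda^2 \to \sigma^2$.

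I do not expect a serious obstacle here — the lemma is a routine spectral computation of the same flavor as the two preceding lemmas. The only mild subtlety is that one must be slightly careful that the convergence $\eps^{4-2\alpha}\xi_0^2 \to 0$ is only pointwise and not uniform in $\xi_0$, so the argument genuinely needs the dominated convergence theorem rather than a naive estimate; but the uniform dominating function above handles this cleanly. A secondary bookkeeping point is verifying the two representations of $\sigma^2$ agree, which is the Laplace-transform/Fubini identity just described and is entirely standard.
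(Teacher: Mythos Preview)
Your proof is correct and follows essentially the same approach as the paper: compute $\sigma_\lambda^2$ via the spectral representation of $D_k\Phi_\lambda$, then apply dominated convergence using $\alpha<2$ and the integrable dominating function $4\hat{R}(\xi_0,\xi)/|\xi|^2$. You have in fact spelled out details (the explicit dominant, the pointwise limit, and the Laplace-transform identification of the two formulas for $\sigma^2$) that the paper leaves implicit.
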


\begin{proof}
Since \begin{equation}
D_k\Phi_\lambda=\int_{\R^{d+1}}\frac{i\xi_k}{\lambda+\frac12|\xi|^2-i\xi_0\eps^{2-\alpha}}U(d\xi_0,d\xi)\V,
\end{equation}
we have
\begin{equation}
\sigma_\lambda^2=\frac{1}{(2\pi)^{d+1}}\int_{\R^{d+1}}\frac{|\xi|^2\hat{R}(\xi_0,\xi)}{(\lambda+\frac12|\xi|^2)^2+\xi_0^2\eps^{4-2\alpha}}d\xi_0d\xi.
\end{equation}
Since $\alpha<2$, by the dominated convergence theorem, the proof is complete.
\end{proof}

Now we decompose the error as
\begin{equation}
\begin{aligned}
u_\eps(t,x)-u_0(t,x)=&\E_B\{f(x+\eps B_{t/\eps^2})\exp(iX_\eps(t))\}-\E_B\{f(x+\eps B_{t/\eps^2})\exp(i M_t^\eps)\}\\
+&\E_B\{f(x+\eps B_{t/\eps^2})\exp(iM_t^\eps)\}-\E_B\{f(x+\eps B_{t/\eps^2})\exp(-\frac12\sigma_\lambda^2t)\}\\
+&\E_B\{f(x+\eps B_{t/\eps^2})\exp(-\frac12\sigma_\lambda^2)\}-\E_B\{f(x+\eps B_{t/\eps^2})\exp(-\frac12\sigma^2 t)\}\\
:=&(I)+(II)+(III).
\end{aligned}
\end{equation}
By the same discussion as before, we have $\E\{|(I)|\}\les \E\E_B\{|R_t^\eps|\}\les \sqrt{\lambda\langle\Phi_\lambda,\Phi_\lambda\rangle}\to 0$, and $\E\{|(III)|\}\les |\sigma_\lambda^2-\sigma^2|\to 0$.

For $(II)$, we rewrite it in Fourier domain as
\begin{equation}
\begin{aligned}
(II)=&\E_B\{f(x+\eps B_{t/\eps^2})\exp(iM_t^\eps)\}-\E_B\{f(x+\eps B_{t/\eps^2})\exp(-\frac12\sigma_\lambda^2t)\}\\
=&\frac{1}{(2\pi)^d}\int_{\R^d}\hat{f}(\xi)e^{i\xi\cdot x}\E_B\{e^{i\xi\cdot \eps B_{t/\eps^2}+iM_t^\eps}-e^{-\frac12(|\xi|^2+\sigma_\lambda^2)t}\}d\xi.\\
%\leq&\frac{1}{(2\pi)^d}\int_{\R^d}|\hat{f}(\xi)||e^{i\xi\cdot \eps B_{t/\eps^2}+iM_t^\eps}-e^{-\frac12(|\xi|^2+\sigma_\lambda^2)t}|d\xi
\end{aligned}
\end{equation}
$\xi\cdot \eps B_{t/\eps^2}+M_t^\eps=\sum_{k=1}^d \eps\int_0^{t/\eps^2}(\xi_k+D_k\Phi_\lambda(y_s^\eps))dB_s^k$ is a continuous and square-integrable martingale for almost every $\omega\in \Omega$, so the estimation of $(II)$ boils down to a control of the Wasserstein distance between the martingale and a Brownian motion. By quantitative martingale central limit theorem \cite[Theorem 3.2]{mourrat2012kantorovich}, we obtain for some constant $C$ that
\begin{equation}
|\E_B\{e^{i\xi\cdot \eps B_{t/\eps^2}+iM_t^\eps}-e^{-\frac12(|\xi|^2+\sigma_\lambda^2)t}\}|\leq C\E_B\{|\sum_{k=1}^d \eps^2\int_0^{t/\eps^2}(\xi_k+D_k\Phi_\lambda(y_s^\eps))^2ds-(|\xi|^2+\sigma_\lambda^2)t|\}.
\end{equation}

Let $\sum_{k=1}^d \eps^2\int_0^{t/\eps^2}(\xi_k+D_k\Phi_\lambda(y_s^\eps))^2ds-(|\xi|^2+\sigma_\lambda^2)t=(i)+(ii)$ with
\begin{eqnarray}
(i)&=&2\sum_{k=1}^d\xi_k \eps^2\int_0^{t/\eps^2} D_k\Phi_\lambda(y_s^\eps)ds,\\
(ii)&=&\eps^2\int_0^{t/\eps^2}\left(\sum_{k=1}^d (D_k\Phi_\lambda(y_s^\eps))^2-\sigma_\lambda^2\right)ds.
\end{eqnarray}
We will show that $\E\E_B\{|\eps^2\int_0^{t/\eps^2}D_k\Phi_\lambda(y_s^\eps)ds|\}\to 0$ and $\E\E_B\{|(ii)|\}\to 0$ as $\eps \to 0$ in Lemma \ref{lem:alphal21} and \ref{lem:alphal22} below, which implies $\E\{|(II)|\}\to 0$ by the dominated convergence theorem if we assume $|\hat{f}(\xi)||\xi|\in L^1$. Therefore $\E\{|u_\eps(t,x)-u_0(t,x)|\}\to 0$, and the proof of the case $\alpha<2$ is complete.

\begin{lemma}
$\E\E_B\{|\eps^2\int_0^{t/\eps^2}D_k\Phi_\lambda(y_s^\eps)ds|\}\to 0$, $k=1,\ldots,d$.
\label{lem:alphal21}
\end{lemma}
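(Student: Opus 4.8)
\textbf{Proof proposal for Lemma \ref{lem:alphal21}.}

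The plan is to estimate the second moment $\E\E_B\{|\eps^2\int_0^{t/\eps^2}D_k\Phi_\lambda(y_s^\eps)ds|^2\}$ and show it tends to $0$; the $L^1$ bound then follows by Cauchy--Schwarz. Expanding the square gives a double time integral
\begin{equation}
\eps^4\int_0^{t/\eps^2}\int_0^{t/\eps^2}\E\E_B\{D_k\Phi_\lambda(y_s^\eps)D_k\Phi_\lambda(y_u^\eps)\}\,ds\,du,
\end{equation}
so the task reduces to bounding the two-point correlation of the stationary Markov process $y_s^\eps$ evaluated on the function $D_k\Phi_\lambda\in L^2(\Omega)$. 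By stationarity and the Markov property this correlation is $\langle D_k\Phi_\lambda, P^\eps_{|s-u|}D_k\Phi_\lambda\rangle$, where $P^\eps$ is the semigroup with generator $L=\eps^{2-\alpha}D_0+\frac12\sum D_j^2$. First I would write everything in the spectral representation used in the preceding lemmas: $D_k\Phi_\lambda=\int \frac{i\xi_k}{\lambda+\frac12|\xi|^2-i\xi_0\eps^{2-\alpha}}U(d\xi_0,d\xi)\V$, and the action of $P^\eps_r$ multiplies the spectral measure by $e^{-\frac12|\xi|^2 r + i\xi_0\eps^{2-\alpha} r}$. Carrying out the $ds\,du$ integration over $[0,t/\eps^2]^2$ of $e^{-\frac12|\xi|^2|s-u|+i\xi_0\eps^{2-\alpha}|s-u|}$ produces, after the change of variables $r=|s-u|$ and extending the $r$-integral to $[0,\infty)$ at a harmless cost, a factor $\frac{t/\eps^2}{\frac12|\xi|^2-i\xi_0\eps^{2-\alpha}}$ (up to lower-order boundary terms). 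Multiplying by $\eps^4$ leaves $\eps^2$ times a spectral integral, and the remaining integral
\begin{equation}
\int_{\R^{d+1}}\frac{\xi_k^2\,\hat R(\xi_0,\xi)}{|\lambda+\frac12|\xi|^2-i\xi_0\eps^{2-\alpha}|^2\,|\frac12|\xi|^2-i\xi_0\eps^{2-\alpha}|}\,d\xi_0\,d\xi
\end{equation}
must be shown to be $o(\eps^{-2})$; since $\hat R$ is bounded and integrable and $d\ge 3$ makes $|\xi|^{-2}$ locally integrable, one checks that this integral is in fact $O(1)$ (uniformly in $\eps$, or growing only slowly), so the whole expression is $O(\eps^2)\to 0$.

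The main obstacle I anticipate is controlling the $\xi_0$-dependence carefully: the denominator $|\lambda+\frac12|\xi|^2-i\xi_0\eps^{2-\alpha}|^2$ does not decay in $\xi_0$ on its own, so one cannot naively bound it below by a positive constant and still get integrability in $\xi_0$. The resolution is to keep the second resolvent factor $|\frac12|\xi|^2-i\xi_0\eps^{2-\alpha}|^{-1}$ together with it; the product behaves like $(\frac12|\xi|^2+|\xi_0|\eps^{2-\alpha})^{-3}$ for large $|\xi_0|$, which, combined with the boundedness and integrability of $\hat R$, yields a uniform-in-$\eps$ bound after the substitution $\eta_0=\xi_0\eps^{2-\alpha}$ (the Jacobian $\eps^{-(2-\alpha)}$ being exactly absorbed because $\alpha<2$ keeps $\eps^{2-\alpha}\to 0$ on the favorable side). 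A cleaner alternative, which I would actually prefer to present, is to avoid the explicit spectral computation and instead argue directly: bound $\E\E_B\{|\eps^2\int_0^{t/\eps^2}D_k\Phi_\lambda(y^\eps_s)\,ds|^2\}$ by $2\eps^4\int_0^{t/\eps^2}\!\!\int_s^{t/\eps^2}|\langle D_k\Phi_\lambda, P^\eps_{u-s}D_k\Phi_\lambda\rangle|\,du\,ds$, note that $\int_0^\infty P^\eps_r D_k\Phi_\lambda\,dr=(-L)^{-1}D_k\Phi_\lambda$ formally, and recognize $(-L)^{-1}D_k\Phi_\lambda$ as (the $k$-th derivative component of) a second corrector; its $L^2(\Omega)$ norm is finite and bounded uniformly in $\eps$ precisely because of the $d\ge 3$ integrability of $|\xi|^{-2}$ seen in Lemma \ref{lem:smallCorrector}. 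Then the double integral is $\les \eps^4\cdot(t/\eps^2)\cdot\|D_k\Phi_\lambda\|\,\|(-L)^{-1}D_k\Phi_\lambda\| = O(\eps^2)$, and we are done.

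In either route the key inputs are: the spectral representation of $D_k\Phi_\lambda$, the semigroup/resolvent identities for $L$, stationarity of $y_s^\eps$ (Lemma \ref{lem:enviPro}), and the $d\ge 3$ local integrability of $|\xi|^{-2}$ against the bounded integrable $\hat R$. I would also remark that the bound is in fact uniform in the parameter $\xi$ coming from the Fourier transform of $f$ after multiplication by $|\xi|$, since only $\|D_k\Phi_\lambda\|$-type quantities enter, which is what the surrounding argument needs when integrating against $\hat f$.
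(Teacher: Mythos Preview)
Your overall strategy---pass to the second moment and compute the two-point correlation of $D_k\Phi_\lambda(y^\eps_s)$ via the spectral calculus---is exactly what the paper does. The gap is in the last step, where you integrate out the time variables and then try to bound the resulting spectral integral uniformly in $\eps$.

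In your first route, after integrating $s,u$ you need
\[
\int_{\R^{d+1}}\frac{\xi_k^2\,\hat R(\xi_0,\xi)}{\bigl((\lambda+\tfrac12|\xi|^2)^2+\xi_0^2\eps^{4-2\alpha}\bigr)\sqrt{\tfrac14|\xi|^4+\xi_0^2\eps^{4-2\alpha}}}\,d\xi_0\,d\xi
\]
to be $O(1)$. Near $\xi=0$ the crude lower bound on the denominators gives an integrand $\les \hat R/|\xi|^4$, and $\int \hat R/|\xi|^4\,d\xi_0\,d\xi$ diverges for $d=3,4$. Your proposed fix via the substitution $\eta_0=\xi_0\eps^{2-\alpha}$ produces the Jacobian $\eps^{-(2-\alpha)}$, which \emph{blows up} as $\eps\to 0$ (since $\alpha<2$); it is not ``absorbed''. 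One can try to use the $\xi_0$-decay in the extra resolvent factor, but then the large-$|\xi|$ side fails because $\hat R(\eta_0\eps^{-(2-\alpha)},\xi)$ loses its $\xi_0$-integrability after the change of variable. In your second route, the claim that $\|(-L)^{-1}D_k\Phi_\lambda\|$ is bounded uniformly in $\eps$ fails for the same reason: spectrally this norm squared is $\int \xi_k^2\hat R/\bigl(|\lambda+\tfrac12|\xi|^2-i\xi_0\eps^{2-\alpha}|^2\,|\tfrac12|\xi|^2-i\xi_0\eps^{2-\alpha}|^2\bigr)$, which near the origin behaves like $|\xi|^{-4}$ after integrating $\xi_0$ and is infinite for $d=3,4$. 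The $d\ge 3$ condition gives you local integrability of $|\xi|^{-2}$, not of $|\xi|^{-4}$.

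The paper sidesteps this entirely by \emph{not} integrating out time. It bounds the spectral weight by $\xi_k^2/\bigl((\lambda+\tfrac12|\xi|^2)^2+\xi_0^2\eps^{4-2\alpha}\bigr)\les |\xi|^{-2}$, keeps the factor $e^{-\tfrac12|\xi|^2|s-u|}$, and after rescaling $s\mapsto \eps^2 s$, $u\mapsto\eps^2 u$ obtains
\[
\int_0^t\!\!\int_0^t\!\!\int_{\R^{d+1}}\frac{e^{-\tfrac{1}{2\eps^2}|\xi|^2|s-u|}\,\hat R(\xi_0,\xi)}{|\xi|^2}\,d\xi_0\,d\xi\,ds\,du.
\]
For each fixed $s\neq u$ the inner integrand is dominated by $\hat R/|\xi|^2$ (integrable since $d\ge3$) and tends to $0$ pointwise; two applications of dominated convergence finish the job. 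The point is that keeping the heat factor $e^{-\tfrac12|\xi|^2|s-u|}$ and letting DCT do the work avoids ever needing the second resolvent factor and hence the $|\xi|^{-4}$ singularity.
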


\begin{proof}
We recall that
\begin{equation}
D_k\Phi_\lambda=\int_{\R^{d+1}}\frac{i\xi_k}{\lambda+\frac12|\xi|^2-i\xi_0\eps^{2-\alpha}}U(d\xi_0,d\xi)\V,
\end{equation}
so
\begin{equation}
\langle T_{(t,x)}D_k\Phi_\lambda,D_k\Phi_\lambda\rangle=\frac{1}{(2\pi)^{d+1}}\int_{\R^{d+1}}\frac{e^{i\xi_0\cdot t}e^{i\xi\cdot x}\xi_k^2\hat{R}(\xi_0,\xi)}{(\lambda+\frac12|\xi|^2)^2+\xi_0^2\eps^{4-2\alpha}}d\xi_0d\xi.
\end{equation}
This leads to
\begin{equation}
\E\E_B\{D_k\Phi_\lambda(y_s^\eps)D_k\Phi_\lambda(y_u^\eps)\}=\frac{1}{(2\pi)^{d+1}}\int_{\R^{d+1}}\frac{e^{i\xi_0\cdot \eps^{2-\alpha}(s-u)}e^{-\frac12|\xi|^2|s-u|}\xi_k^2\hat{R}(\xi_0,\xi)}{(\lambda+\frac12|\xi|^2)^2+\xi_0^2\eps^{4-2\alpha}}d\xi_0d\xi.
\end{equation}
Therefore,
\begin{equation}
\begin{aligned}
(\E\E_B\{|\eps^2\int_0^{t/\eps^2}D_k\Phi_\lambda(y_s^\eps)ds|\})^2\leq& \E\E_B\{\eps^4\int_0^{t/\eps^2}\int_0^{t/\eps^2}D_k\Phi_\lambda(y_s^\eps)D_k\Phi_\lambda(y_u^\eps)dsdu\}\\
\les &\eps^4\int_0^{t/\eps^2}\int_0^{t/\eps^2}\int_{\R^{d+1}}\frac{e^{-\frac12|\xi|^2|s-u|}\hat{R}(\xi_0,\xi)}{|\xi|^2}d\xi_0d\xi dsdu\\
=&\int_0^{t}\int_0^{t}\int_{\R^{d+1}}\frac{e^{-\frac{1}{2\eps^2}|\xi|^2|s-u|}\hat{R}(\xi_0,\xi)}{|\xi|^2}d\xi_0d\xi dsdu\to 0
\end{aligned}
\end{equation}
by the dominated convergence theorem.
\end{proof}

\begin{lemma}
$\E\E_B\{|(ii)|\}\to 0$.
\label{lem:alphal22}
\end{lemma}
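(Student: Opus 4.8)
\textbf{Proof plan for Lemma \ref{lem:alphal22}.}
The quantity $(ii)=\eps^2\int_0^{t/\eps^2}\bigl(\sum_{k=1}^d (D_k\Phi_\lambda(y_s^\eps))^2-\sigma_\lambda^2\bigr)ds$ is a time average of a mean-zero functional of the stationary Markov process $y_s^\eps$ (note $\E\{\sum_k (D_k\Phi_\lambda)^2\}=\sigma_\lambda^2$ by definition), so the natural route is the standard $L^2$ (second moment) bound: we square, use the Cauchy-Schwarz inequality to pass from $\E\E_B\{|(ii)|\}$ to $\sqrt{\E\E_B\{|(ii)|^2\}}$, and expand into a double time integral
\begin{equation}
\E\E_B\{|(ii)|^2\}=\eps^4\int_0^{t/\eps^2}\int_0^{t/\eps^2}\Bigl(\E\E_B\bigl\{\Psi_\lambda(y_s^\eps)\Psi_\lambda(y_u^\eps)\bigr\}\Bigr)\,ds\,du,
\end{equation}
where $\Psi_\lambda:=\sum_{k=1}^d(D_k\Phi_\lambda)^2-\sigma_\lambda^2$ is a mean-zero element of $L^2(\Omega)$ (bounded, in fact, once we note $D_k\Phi_\lambda$ is bounded since $\V$ is). The task is then to show that the correlation $\E\E_B\{\Psi_\lambda(y_s^\eps)\Psi_\lambda(y_u^\eps)\}$ decays fast enough in $|s-u|$ that, after multiplication by $\eps^4$ and integration over a box of side $t/\eps^2$, the whole thing vanishes as $\eps\to 0$.

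First I would make the covariance structure explicit. Conditioning on the Brownian path, $y_s^\eps$ and $y_u^\eps$ are shifts of $\omega$ by $(\eps^{2-\alpha}s,B_s)$ and $(\eps^{2-\alpha}u,B_u)$; the spatial separation is $|B_s-B_u|$ and the temporal separation is $\eps^{2-\alpha}|s-u|$. Since $\Psi_\lambda$ is a quadratic functional of $V$ built from the spectral measure, $\E\{\Psi_\lambda(\tau_{(p,z)}\omega)\Psi_\lambda(\omega)\}$ can be written (via the power spectrum $\hat R$, or directly via a fourth-order cumulant/covariance expression in $R$) as a convolution-type kernel in $(p,z)$ that is absolutely integrable in $(p,z)\in\R^{d+1}$ when $d\geq 3$ — this integrability is the same mechanism (the $1/|\xi|^2$ weight against the bounded integrable $\hat R$, now appearing squared or convolved with itself) that already gave Lemma \ref{lem:smallCorrector} and Lemma \ref{lem:alphal21}. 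Then
\begin{equation}
\E\E_B\{\Psi_\lambda(y_s^\eps)\Psi_\lambda(y_u^\eps)\}=\E_B\Bigl\{G_\lambda\bigl(\eps^{2-\alpha}(s-u),\,B_s-B_u\bigr)\Bigr\}
\end{equation}
for a kernel $G_\lambda$ with $\int_{\R^{d+1}}\sup_\lambda|G_\lambda(p,z)|\,dp\,dz<\infty$; using $B_s-B_u\stackrel{d}{=}\sqrt{|s-u|}\,Z$ with $Z\sim N(0,I_d)$ and the heat-kernel bound $\|q_{|s-u|}\|_\infty\les |s-u|^{-d/2}$ to integrate out the Gaussian, one gets $|\E_B\{G_\lambda(\cdot)\}|\les (1\wedge |s-u|^{-d/2})\cdot g(\eps^{2-\alpha}(s-u))$ for an integrable $g$, i.e. a bound that is integrable in $|s-u|$ over $\R$ since $d\geq 3$ forces $d/2>1$. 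Substituting into the double integral, $\eps^4\int_0^{t/\eps^2}\int_0^{t/\eps^2}(\text{integrable in }|s-u|)\,ds\,du\les \eps^4\cdot(t/\eps^2)\cdot O(1)=O(\eps^2)\to 0$; the temporal factor $g(\eps^{2-\alpha}(s-u))$ only helps (and should be handled by dominated convergence so the bound is genuinely $o(1)$), so we conclude $\E\E_B\{|(ii)|^2\}\to 0$ and hence $\E\E_B\{|(ii)|\}\to 0$.

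The main obstacle is the bookkeeping for the quadratic-in-$V$ kernel $G_\lambda$: one must verify that the covariance of $\Psi_\lambda(\tau_{(p,z)}\omega)$ with $\Psi_\lambda(\omega)$ — a fourth-order object in $V$ — is genuinely absolutely integrable in $(p,z)$ uniformly in $\lambda$, and that the single power of $1/|\xi|^2$ surviving in each factor $D_k\Phi_\lambda$ is enough (this is why $d\geq 3$ is essential: $1/|\xi|^2$ integrable near the origin). Here one has a choice of machinery: either work entirely on the spectral side and control the product of two resolvent factors $(\lambda+\tfrac12|\xi|^2-i\xi_0\eps^{2-\alpha})^{-1}$ by $C/|\xi|^2$ pointwise and invoke dominated convergence as in the preceding lemmas, or work with $R(t,x)$ directly, write out the Gaussian (Wick/cumulant) expansion of the fourth moment of $V$ (boundedness of $V$ is not enough for exact Wick, but the mixing from Assumption \ref{ass:mixing} lets one bound the connected part), and use $\int_\R\sup_x|R(t,x)|\,dt<\infty$ together with $\int_{\R^d}\sup_t|R(t,x)|\,dx$-type bounds. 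I would favor the spectral route for uniformity in $\lambda$, deferring any finer cumulant estimates to the Appendix, and present only the reduction to the integrable-kernel bound in the main text.
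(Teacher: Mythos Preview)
Your overall architecture matches the paper's: pass to $\E\E_B\{|(ii)|^2\}$, write it as a double time integral of the covariance of the quadratic functional $\Psi_\lambda=\sum_k(D_k\Phi_\lambda)^2-\sigma_\lambda^2$, and exploit the spatial averaging from the Brownian motion. The difficulty is entirely in the step you label ``bookkeeping'', and there your plan has a genuine gap.

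The assertion that $G_\lambda(p,z)=\E\{\Psi_\lambda(\tau_{(p,z)}\omega)\Psi_\lambda(\omega)\}$ is absolutely integrable in $(p,z)$ \emph{uniformly in $\lambda$} is not just unproved---it is false. Because $\Phi_\lambda$ is built from the resolvent $(\lambda-L)^{-1}$, the gradient $D_k\Phi_\lambda$ has spatial correlation length of order $\lambda^{-1/2}$, and so does $\Psi_\lambda$; consequently $\int_{\R^d}\sup_p|G_\lambda(p,z)|\,dz$ diverges like $\lambda^{-1}$ as $\lambda\to0$. With your heat-kernel sup-bound $q_\tau(z)\les (1\wedge\tau^{-d/2})$ you would obtain $\E\E_B\{|(ii)|^2\}\les \eps^2 t\cdot\|G_\lambda\|_{L^1_z}\cdot\int_0^\infty(1\wedge\tau^{-d/2})\,d\tau\sim \eps^2\cdot\lambda^{-1}=O(1)$, which does not close. (Relatedly, the parenthetical ``$D_k\Phi_\lambda$ is bounded since $\V$ is'' is incorrect: $D_k\Phi_\lambda$ is a convolution of $V$ against $\partial_{x_k}\mathcal G_\lambda\sim|x|^{1-d}$, which is not integrable, so no pointwise bound follows.) Your proposed spectral shortcut also does not help: since $\Psi_\lambda$ is quadratic in $V$, its covariance is a fourth-order object in $V$ and is \emph{not} determined by the power spectrum $\hat R$; this is precisely why the paper invokes the mixing-based fourth-moment bound (their Lemma~\ref{lem:4Moment}).

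The paper fixes both issues simultaneously. Instead of the $L^\infty$ bound on $q_\tau$, it integrates the heat kernel over time first, using $\int_0^\infty q_\tau(z)\,d\tau\sim|z|^{2-d}$ (Lemma~\ref{lem:VarBMRS}), to obtain
\[
\E\E_B\{|(ii)|^2\}\les \eps^2\int_{\R^d}\frac{\sup_p|G_\lambda(p,z)|}{|z|^{d-2}}\,dz.
\]
The extra weight $|z|^{2-d}$ is exactly what compensates the spreading of $G_\lambda$. Then, writing $D_k\Phi_\lambda$ via the parabolic Green's function and applying the fourth-moment mixing estimate, the paper obtains the explicit (non-uniform) bound
\[
\sup_p|G_\lambda(p,z)|\les \lambda^{\frac d2-1}e^{-c\sqrt\lambda|z|}+1\wedge\frac{e^{-c\sqrt\lambda|z|}}{|z|^{d-2}}+1\wedge|z|^{-\beta},
\]
and checks by direct computation that each piece, integrated against $|z|^{2-d}$ and multiplied by $\eps^2=\lambda$, tends to zero (e.g.\ the first term contributes $\eps^2\lambda^{d/2-2}\to0$ for $d\ge3$). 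So the missing idea in your plan is to replace the pointwise heat-kernel bound by the Green's-function weighting, and to accept a $\lambda$-dependent covariance estimate rather than a uniform one.
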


\begin{proof}
Define $\mathcal{Z}_\lambda(t,x)=\sum_{k=1}^d (D_k\Phi_\lambda(\tau_{(\eps^{2-\alpha}t,x)}\omega))^2-\sigma_\lambda^2$, which is zero-mean and stationary. Then $(ii)=\eps^2\int_0^{t/\eps^2}\mathcal{Z}_\lambda(s,B_s)ds$. Denote the covariance function of $\mathcal{Z}_\lambda$ by $\mathcal{R}_\lambda(t,x)$, Lemma \ref{lem:VarBMRS} implies
\begin{equation}
\E\E_B\{|(ii)|^2\}\les \eps^2\int_{\R^d}\frac{\sup_t|\mathcal{R}_\lambda(t,x)|}{|x|^{d-2}}dx,
\end{equation}
so we only need to estimate $\sup_t |\mathcal{R}_\lambda(t,x)|$.

Clearly,
\begin{equation}
\mathcal{R}_\lambda(t,x)=\sum_{m,n=1}^d \E\{(D_m\Phi_\lambda(\tau_{(\eps^{2-\alpha}t,x)}\omega))^2(D_n\Phi_\lambda(\tau_{(0,0)}\omega))^2\}-\sigma_\lambda^4.
\end{equation}
Let $G_\lambda(t,x)$ be the Green's function of $\lambda-\eps^{2-\alpha}\partial_t-\frac12\Delta$, it is straightforward to check that
\begin{equation}
G_\lambda(t,x)=\eps^{\alpha-2}e^{\eps^\alpha t}q_{-t\eps^{\alpha-2}}(x)1_{t<0},
\end{equation}
so $D_k\Phi_\lambda(\tau_{(t,x)}\omega)=\int_{\R^{d+1}}\partial_{x_k}G_\lambda(t-s,x-y)V(s,y)dsdy$, and we obtain
\begin{equation}
\begin{aligned}
&\E\{(D_m\Phi_\lambda(\tau_{(\eps^{2-\alpha}t,x)}\omega))^2(D_n\Phi_\lambda(\tau_{(0,0)}\omega))^2\}\\
=&\int_{\R^{4d+4}}\prod_{i=1}^2\partial_{x_m}G_\lambda(\eps^{2-\alpha}t-s_i,x-y_i)\prod_{i=3}^4\partial_{x_n}G_\lambda(-s_i,-y_i)\E\{\prod_{i=1}^4V(s_i,y_i)\}dsdy.
\end{aligned}
\end{equation}
By Lemma \ref{lem:4Moment}, we have
\begin{equation}
\begin{aligned}
&|\E\{\prod_{i=1}^4V(s_i,y_i)\}-R(s_1-s_2,y_1-y_2)R(s_3-s_4,y_3-y_4)|\\
\leq &\Psi(s_1-s_3,y_1-y_3)\Psi(s_2-s_4,y_2-y_4)+
\Psi(s_1-s_4,y_1-y_4)\Psi(s_2-s_3,y_2-y_3)\\
\leq &g(y_1-y_3)g(y_2-y_4)+g(y_1-y_4)g(y_2-y_3)
\end{aligned}
\end{equation}
for $g(x)=\sup_t \Psi(t,x)$. In addition, we have $\int_\R |\partial_{x_k} G_\lambda(s,x)|ds=\int_0^\infty e^{-\lambda t}q_t(x)\frac{|x_k|}{t}dt=|\partial_{x_k}\mathcal{G}_\lambda(x)|$, where $\mathcal{G}_\lambda(x)=\int_0^\infty e^{-\lambda t}q_t(x)dt$ is the Green's function of $\lambda-\frac12\Delta$. Therefore, by the fact that
\begin{equation}
\sigma_\lambda^4=\sum_{m,n=1}^d\int_{\R^{4d+4}}\prod_{i=1}^2\partial_{x_m}G_\lambda(\eps^{2-\alpha}t-s_i,x-y_i)\prod_{i=3}^4\partial_{x_n}G_\lambda(-s_i,-y_i)R(s_1-s_2,y_1-y_2)R(s_3-s_4,y_3-y_4)\}dsdy
\end{equation}
and
\begin{equation}
\begin{aligned}
&\int_{\R^{4d+4}}\prod_{i=1}^2|\partial_{x_m}\G_\lambda(x-y_i)|\prod_{i=3}^4|\partial_{x_n}\G_\lambda(-y_i)|
\left(g(y_1-y_3)g(y_2-y_4)+g(y_1-y_4)g(y_2-y_3)\right)dy\\
\les &\left(\int_{\R^{2d}}|\partial_{x_m}\G_\lambda(y)||\partial_{x_n}\G_\lambda(z)|g(x-y-z)dydz\right)^2,
\end{aligned}
\end{equation}
we obtain
\begin{equation}
\sup_t|\mathcal{R}_\lambda(t,x)|\les \left(\int_{\R^{2d}}\frac{e^{-c\sqrt{\lambda}|y|}}{|y|^{d-1}}\frac{e^{-c\sqrt{\lambda}|z|}}{|z|^{d-1}}g(x-y-z)dydz\right)^2,
\end{equation}
where we have also used the fact $|\partial_{x_k} \G_\lambda(y)|\leq Ce^{-c\sqrt{\lambda}|x|}|x|^{1-d}$ for some $c,C>0$. Since $g(x)\leq C_\beta (1\wedge |x|^{-\beta})$ for any $\beta>0$, by \cite[estimation of (3.40)]{gu2013weak}
\begin{equation}
\sup_t|\mathcal{R}_\lambda(t,x)|\les \lambda^{\frac{d}{2}-1}e^{-c\sqrt{\lambda}|x|}+1\wedge \frac{e^{-c\sqrt{\lambda}|x|}}{|x|^{d-2}}+1\wedge \frac{1}{|x|^\beta}
\end{equation}
for some constant $c>0$ and $\beta>0$ sufficiently large. By direct calculation, we have \begin{equation}
\E\E_B\{|(ii)|^2\}\les \eps^2\int_{\R^d}\frac{\sup_t|\mathcal{R}_\lambda(t,x)|}{|x|^{d-2}}dx\to 0.
\end{equation}
 The proof is complete.
\end{proof}

\begin{remark}
By a similar discussion as in \cite{gu2013weak}, we can actually establish an error estimate for the case $\alpha\in [0,2)$, which we do not present here.
\end{remark}

\section{$\alpha=\infty$: temporal mixing and convergence to SPDE}
\label{sec:alphaInfty}

In this regime, $V_\eps(t,x)=\frac{1}{\sqrt{\eps}}V(\frac{t}{\eps},x)$, so heuristically it approaches a white noise in the temporal direction. We define a formally written random variable $\int_0^t\dot{W}(t-s,x+B_s)ds$, i.e., a Brownian motion in Gaussian noise, by standard mollification argument.

Let $\RR(x)=\int_\R R(t,x)dt$ and define the Gaussian noise $W(dt,dx)$ on $\Omega$ with a formal covariance structure
\begin{equation}
\E\{\dot{W}(t,x)\dot{W}(s,y)\}=\delta(t-s)\RR(x-y).
\end{equation}
Let $B_t$ be an independent Brownian motion.

We pick a mollifier $\phi_\eps(t)q_\eps(x)$ and define the stationary random field
\begin{equation}
W_\eps(t,x)=\int_{\R^{d+1}}\phi_\eps(t-s)q_\eps(x-y)W(ds,dy),
\end{equation}
where $\phi_\eps(t)=\frac{1}{\eps}1_{[-\eps,0]}(t)$ and $q_\eps(x)$ is the heat kernel with variance $\eps$. $W_\eps(t,x)$ is a well-defined Wiener integral and clearly\begin{equation}
\E\{W_\eps(t,x)W_\eps(s,y)\}=\frac{\eps-|t-s|}{\eps^2}1_{|t-s|<\eps}\frac{1}{(2\pi)^d}\int_{\R^d}e^{-|\xi|^2\eps}\hat{\RR}(\xi)e^{i\xi\cdot (x-y)}d\xi.
\label{eq:varWeps}
\end{equation}

Since $\E\{W_\eps(t,x)^2\}<\infty$, $\int_0^tW_\eps(t-s,x+B_s)ds$ is well-defined for every realization of $B_s$.

\begin{proposition}
For almost every realization of $B_s$, $\int_0^t W_\eps(t-s,x+B_s)ds$ is a Cauchy sequence in $L^2(\Omega)$, whose limit is denoted as $\int_0^t\dot{W}(t-s,x+B_s)ds\sim N(0,\RR(0)t)$. 
%In addition, we have for any $T>0$ that
%\begin{equation}
%\sup_{t\in [0,T],x\in \R^d}|\E\E_B\{\int_0^tW_{\eps_1}(t-s,x+B_s)ds\int_0^t W_{\eps_2}(t-s,x+B_s)ds-\RR(0)t|\to 0
%\end{equation}
%as $\eps_1,\eps_2\to 0$.
\label{prop:cauchy}
\end{proposition}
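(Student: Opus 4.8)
The plan is to show that $\int_0^t W_\eps(t-s,x+B_s)\,ds$, viewed as a family of Wiener integrals indexed by $\eps$, forms a Cauchy family in $L^2(\Omega)$ for a.e.\ realization of $B$, by computing the $L^2(\Omega)$ inner products explicitly and passing to the limit with the explicit covariance \eqref{eq:varWeps}. First I would fix a Brownian path $B$ and write, using the Wiener-integral isometry together with \eqref{eq:varWeps},
\begin{equation}
\E\{\textstyle\left(\int_0^t W_\eps(t-s,x+B_s)\,ds\right)\left(\int_0^t W_{\eps'}(t-u,x+B_u)\,du\right)\}
=\int_0^t\int_0^t K_{\eps,\eps'}(s-u,B_s-B_u)\,ds\,du,
\end{equation}
where $K_{\eps,\eps'}$ is the covariance of $W_\eps$ and $W_{\eps'}$, obtained from $\E\{W(ds,dy)W(ds',dy')\}=\delta(s-s')\RR(y-y')\,ds\,ds'\,dy\,dy'$ as a double mollification; in Fourier it equals $(2\pi)^{-d}\int_{\R^d}\psi_{\eps,\eps'}(s-u)e^{-|\xi|^2(\eps+\eps')/2}\hat{\RR}(\xi)e^{i\xi\cdot(B_s-B_u)}\,d\xi$ with $\psi_{\eps,\eps'}(\cdot)=(\phi_\eps*\phi_{\eps'})(\cdot)$ an approximate identity in the time variable of total mass $1$ supported near $0$.

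Next I would show the limit of this double integral as $\eps,\eps'\to0$ exists and equals $\int_0^t \RR(0)\,ds = \RR(0)t$; indeed, since $\psi_{\eps,\eps'}\to\delta$ and $e^{-|\xi|^2(\eps+\eps')/2}\to1$, one formally collapses the $u$-integral against $s$, getting $\int_0^t \tfrac{1}{(2\pi)^d}\int_{\R^d}\hat{\RR}(\xi)\,d\xi\,ds = \RR(0)t$. To make this rigorous I would dominate: $|K_{\eps,\eps'}(s-u,B_s-B_u)|\le \psi_{\eps,\eps'}(s-u)\,\RR(0)$ (since the spatial factor is bounded by its value at the origin, using positive-definiteness of $\RR$), so the double integral is bounded by $\RR(0)\int_0^t\int_0^t\psi_{\eps,\eps'}(s-u)\,ds\,du\le \RR(0)t$ uniformly, and continuity of $s\mapsto B_s$ plus boundedness and continuity of $x\mapsto\RR(x)$ (which follow from boundedness of $\hat\RR\in L^1$, itself a consequence of Assumption \ref{ass:mixing} giving $\RR\in L^1\cap L^\infty$ with bounded continuous Fourier transform) lets me apply dominated convergence to the $(s,u)$ integral after the change of variables $u=s+\tau/$(scale). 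Carrying this out for the three inner products $\langle\cdot_\eps,\cdot_\eps\rangle$, $\langle\cdot_{\eps'},\cdot_{\eps'}\rangle$, $\langle\cdot_\eps,\cdot_{\eps'}\rangle$ all yielding the same limit $\RR(0)t$ gives $\E\{(\int_0^t W_\eps - \int_0^t W_{\eps'})^2\}\to0$, i.e.\ the Cauchy property; the limit is Gaussian as an $L^2$-limit of Gaussians, mean zero, with variance $\RR(0)t$.

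The main obstacle is the rigorous justification of collapsing the temporal $\delta$-function against the $(s,u)$ double integral: $\psi_{\eps,\eps'}(s-u)$ concentrates on the diagonal but the spatial argument $B_s-B_u$ is itself varying there, and one must check the contribution of the boundary layers near $s=0$ and $s=t$ (where only "half" the mollifier mass is seen) is negligible, which is where the uniform bound $\le\RR(0)t$ and the continuity of the Brownian path are used. A secondary technical point is ensuring $W_\eps(t-s,x+B_s)$ is genuinely well-defined as a Wiener integral jointly measurable in $(s,\omega)$ for a.e.\ $B$-path so that Fubini applies to interchange $\E$ with $\int_0^t\,ds$; this follows from $\E\{W_\eps(t,x)^2\}<\infty$ (noted in the excerpt) and joint continuity of $(t,x)\mapsto W_\eps(t,x)$ in $L^2(\Omega)$.
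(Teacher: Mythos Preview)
Your proposal is correct and follows essentially the same approach as the paper: compute the cross inner product $\E\{(\int_0^t W_{\eps_1})(\int_0^t W_{\eps_2})\}$ explicitly via the Wiener isometry, pass to the Fourier representation in space, and use continuity of the Brownian path together with dominated convergence to show the limit is $\RR(0)t$ independently of the pair $(\eps_1,\eps_2)$. The only organizational difference is that the paper keeps the auxiliary time variable $r$ (from the definition of $W_\eps$) as the outermost integration variable---so that for each fixed $r\in(0,t)$ the inner $(s,u)$ integral is bounded by $1$ and DCT in $r$ is immediate---whereas you integrate $r$ out first to form the convolution kernel $\psi_{\eps,\eps'}=\phi_\eps*\phi_{\eps'}$ and then run an approximate-identity argument in $s-u$; this is the same computation read in a different order, and your identification of the boundary-layer and Fubini issues matches what the paper implicitly handles.
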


\begin{proof}
We will show that for almost every realization of $B_s$,
\begin{equation}
\lim_{\eps_1,\eps_2\to 0}\E\{\int_0^t W_{\eps_1}(t-s,x+B_s)ds\int_0^t W_{\eps_2}(t-s,x+B_s)ds\}=\RR(0)t,
\end{equation} so $\int_0^t W_\eps(t-s,x+B_s)ds$ is a Cauchy sequence in $L^2(\Omega)$.

By direct calculation, we have
\begin{equation}
\begin{aligned}
&\E\{\int_0^t W_{\eps_1}(t-s,x+B_s)ds\int_0^t W_{\eps_2}(t-s,x+B_s)ds\}\\
=&\int_0^t\int_0^tdsdu \int_{\R^{2d+1}}\phi_{\eps_1}(t-s-r)q_{\eps_1}(x+B_s-y) \phi_{\eps_2}(t-u-r)q_{\eps_2}(x+B_u-z)\RR(y-z)drdydz\\
=&\int_0^t\int_0^tdsdu \int_{\R^{2d+1}}\phi_{\eps_1}(s-r) \phi_{\eps_2}(u-r)q_{\eps_1}(y)q_{\eps_2}(z)\RR(y-z+B_{t-s}-B_{t-u})drdydz\\
=&\frac{1}{(2\pi)^d}\int_{\R^{d}}e^{-\frac12|\xi|^2(\eps_1+\eps_2)}\hat{\RR}(\xi)\left(\int_\R\int_0^t\int_0^t\phi_{\eps_1}(s-r) \phi_{\eps_2}(u-r)e^{i\xi\cdot (B_{t-s}-B_{t-u})}dsdudr\right)d\xi.
\end{aligned}
\end{equation}
For fixed $\xi\in \R^d$, we consider \begin{equation}
\begin{aligned}
&\int_\R\int_0^t\int_0^t\phi_{\eps_1}(s-r) \phi_{\eps_2}(u-r)e^{i\xi\cdot (B_{t-s}-B_{t-u})}dsdudr\\
=&\frac{1}{\eps_1\eps_2}\int_\R\int_0^t\int_0^t 1_{r-\eps_1<s<r}1_{r-\eps_2<u<r}e^{i\xi\cdot (B_{t-s}-B_{t-u})}dsdudr\\
=&\int_{0}^{t+(\eps_1\vee\eps_2)}\left(\frac{1}{\eps_1\eps_2}\int_0^t\int_0^t 1_{r-\eps_1<s<r}1_{r-\eps_2<u<r}e^{i\xi\cdot (B_{t-s}-B_{t-u})}dsdu\right)dr.
\end{aligned}
\end{equation}
For almost every realization, $B_s$ is continuous in $[0,t]$, so we have as $\eps_1,\eps_2\to 0$, $e^{i\xi\cdot (B_{t-s}-B_{t-u})}\to 1$ almost surely. Thus for fixed $r\in (0,t)$, $\frac{1}{\eps_1\eps_2}\int_0^t\int_0^t 1_{r-\eps_1<s<r}1_{r-\eps_2<u<r}e^{i\xi\cdot (B_{t-s}-B_{t-u})}dsdu\to 1$ almost surely, which implies $\int_\R\int_0^t\int_0^t\phi_{\eps_1}(s-r) \phi_{\eps_2}(u-r)e^{i\xi\cdot (B_{t-s}-B_{t-u})}dsdudr\to t$ almost surely. By the dominated convergence theorem, we have
\begin{equation}
\E\{\int_0^t W_{\eps_1}(t-s,x+B_s)ds\int_0^t W_{\eps_2}(t-s,x+B_s)ds\}\to \RR(0)t.
\end{equation}
Therefore,  for almost every realization of $B_s$, we can define a random variable $\int_0^t\dot{W}(t-s,x+B_s)ds$ by $\int_0^t\dot{W}(t-s,x+B_s)ds=\lim_{\eps\to0}\int_0^t W_\eps(t-s,x+B_s)ds$. For fixed $B_s$, $\int_0^t W_\eps(t-s,x+B_s)ds$ is a zero-mean Gaussian, and by the above calculation, we obtain $\E\{(\int_0^t W_\eps(t-s,x+B_s)ds)^2\}\to \RR(0)t$. So $\int_0^t\dot{W}(t-s,x+B_s)ds\sim N(0,\RR(0)t)$ for almost every realization of $B_s$.

%We note that
%\begin{equation}
%\begin{aligned}
%&\E\E_B\{\int_0^tW_{\eps_1}(t-s,x+B_s)ds\int_0^t W_{\eps_2}(t-s,x+B_s)ds\}\\
%=&\frac{1}{(2\pi)^d}\int_{\R^{d}}e^{-\frac12|\xi|^2(\eps_1+\eps_2)}\hat{\RR}(\xi)\int_0^{t+(\eps_1\vee\eps_2)}\left(\frac{1}{\eps_1\eps_2}\int_0^t\int_0^t 1_{r-\eps_1<s<r}1_{r-\eps_2<u<r}e^{-\frac12|\xi|^2|s-u|}dsdu\right)drd\xi,
%\end{aligned}
%\end{equation}
%so it is straightforward to check for any $T>0$:
%\begin{equation}
%\sup_{t\in [0,T],x\in \R^d}|\E\E_B\{\int_0^tW_{\eps_1}(t-s,x+B_s)ds\int_0^t W_{\eps_2}(t-s,x+B_s)ds-\RR(0)t|\to 0
%\end{equation}
%as $\eps_1,\eps_2\to 0$.
 The proof is complete.
\end{proof}

\begin{remark}
By the same proof as in Proposition \ref{prop:cauchy}, we can define $\int_0^t\dot{W}(t-s,x+f_s)ds, \int_0^t\dot{W}(t-s,x+g_s)ds$ for any two continuous paths $f_s,g_s$, and obtain that
\begin{equation}
\E\{\int_0^t\dot{W}(t-s,x+f_s)ds\int_0^t\dot{W}(t-s,x+g_s)ds\}=\int_0^t\RR(f_s-g_s)ds.
\end{equation}
In particular, we can choose $f_s=B_s$ and $g_s=W_s$ for independent Brownian motions $B,W$.
\label{re:cauchy}
\end{remark}

%\begin{remark}
%While different choices of the mollifier $\phi_\eps(t)$ lead to the same result as in Proposition \ref{prop:cauchy}, e.g., $\phi_\eps(t)=\frac{1}{\eps}1_{[0,\eps]}(t)$ or $\phi_\eps(t)=\frac{1}{2\eps}1_{[-\eps,\eps]}(t)$, it nonetheless affects the stochastic integral defined through the approximation. We choose $\phi_\eps(t)=\frac{1}{\eps}1_{[-\eps,0]}(t)$, which leads to stochastic integral in the It\^o's sense, so see Lemma \ref{lem:conStoInte}.
%\end{remark}

The solution to the SPDE with multiplicative noise in the Stratonovich's sense
\begin{equation}
\partial_tu_0=\frac12\Delta u_0+i \dot{W}\circ u_0,
\label{eq:limitSPDE}
\end{equation} and initial condition $u_0=f$ is then defined by Feynman-Kac formula as
\begin{equation}
u_0(t,x)=\E_B\{f(x+B_t)\exp(i\int_0^t\dot{W}(t-s,x+B_s)ds)\}.
\label{eq:FKSoSPDE}
\end{equation}
\eqref{eq:limitSPDE} is written in the It\^o's form as
\begin{equation}
\partial_t u_0(t,x)=\frac12\Delta u_0(t,x)-\frac12\RR(0)u_0(t,x)+i\dot{W}(t,x)u_0(t,x).
\label{eq:limitSPDEIto}
\end{equation}
By \cite[Theorem 3.1]{hu2012nonlinear}, the solution given by \eqref{eq:FKSoSPDE} is a mild solution to \eqref{eq:limitSPDEIto}.

Our goal is to show that for fixed $(t,x)$, $u_\eps(t,x)=\E_B\{f(x+B_t)\exp(i\frac{1}{\sqrt{\eps}}\int_0^tV(\frac{s}{\eps},x+B_s)ds)\}$ converges in distribution to $u_0(t,x)$. Since $u_\eps(t,x), u_0(t,x)$ are both bounded, it suffices to prove the convergence of moments.
\begin{proposition}
 For any $N_1,N_2\in \mathbb{N}$, as $\eps \to 0$,
\begin{equation}
\E\{u_\eps(t,x)^{N_1}\overline{u_\eps(t,x)^{N_2}}\}\to \E\{u_0(t,x)^{N_1}\overline{u_0(t,x)^{N_2}}\}.
\end{equation}
\label{prop:conMoment}
\end{proposition}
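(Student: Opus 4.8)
The plan is to expand each moment via the Feynman–Kac representation and reduce the problem to the weak convergence of a vector of Brownian-motion-in-random-scenery functionals, driven by $N_1+N_2$ independent Brownian motions. Concretely, writing $N=N_1+N_2$ and letting $B^1,\dots,B^N$ be independent Brownian motions, we have
\begin{equation}
\E\{u_\eps(t,x)^{N_1}\overline{u_\eps(t,x)^{N_2}}\}=\E_B\Big\{\prod_{j=1}^N f(x+B^j_t)\ \E\big\{\exp\big(i\,\textstyle\sum_{j=1}^N \theta_j\,\eps^{-1/2}\int_0^t V(s/\eps,x+B^j_s)ds\big)\big\}\Big\},
\end{equation}
where $\theta_j=+1$ for $j\le N_1$ and $\theta_j=-1$ for $j>N_1$. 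The analogous identity for $u_0$, using \eqref{eq:FKSoSPDE} and Remark \ref{re:cauchy}, gives a Gaussian expression in which the inner $\Omega$-expectation equals $\exp\big(-\tfrac12\sum_{j,l}\theta_j\theta_l\int_0^t\RR(B^j_s-B^l_s)ds\big)$. So it suffices to show that, for almost every realization of $(B^1,\dots,B^N)$,
\begin{equation}
\E\Big\{\exp\big(i\,\textstyle\sum_{j=1}^N \theta_j\,\eps^{-1/2}\int_0^t V(s/\eps,x+B^j_s)ds\big)\Big\}\ \longrightarrow\ \exp\Big(-\tfrac12\textstyle\sum_{j,l=1}^N\theta_j\theta_l\int_0^t\RR(B^j_s-B^l_s)ds\Big),
\end{equation}
and then invoke the dominated convergence theorem (everything is bounded by $\|f\|_\infty^N$) to pass the limit through $\E_B$.

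The convergence displayed above is a central limit theorem, \emph{for frozen Brownian paths}, for the additive functional $\eps^{-1/2}\int_0^t \Xi_\eps(s)\,ds$ of the mixing field, where $\Xi_\eps(s):=\sum_j\theta_j V(s/\eps,x+B^j_s)$. After the change of variables $s\mapsto \eps s$ this becomes $\eps^{1/2}\int_0^{t/\eps}\sum_j\theta_j V(s,x+B^j_{\eps s})\,ds$; since the paths $B^j_{\eps s}$ vary slowly on the unit time scale while $V(s,\cdot)$ mixes in $s$ over $O(1)$ windows, this is precisely the setting of the $\alpha>2$ analysis of Section \ref{sec:alphaG2} (indeed it is the degenerate endpoint with no spatial micro-structure). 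I would therefore follow the same blocking argument: cut $[0,t/\eps]$ into blocks of length $\eps^{-\gamma_1}$ separated by gaps of length $\eps^{-\gamma_2}$ with $0<\gamma_2<\gamma_1$ small; show the gap contributions are $L^2(\Omega)$-negligible using $\int_\R\sup_x|R(\tau,x)|d\tau<\infty$; decouple the block contributions up to an error controlled by $\varphi(\eps^{-\gamma_2})$ via Lemma \ref{lem:condiMixing}; Taylor-expand each block's characteristic function (the cubic remainder is $o$ of the quadratic term because $V$ is bounded and $\gamma_1$ is small); and finally identify the limiting variance. The variance computation is where the path-dependence enters: the quadratic term for a block based at time $r$ contributes, after scaling, $\eps\int_{I_r}\int_{I_r} \E\{\Xi_\eps(s)\Xi_\eps(u)\}\,ds\,du$, and since $\E\{\Xi_\eps(s)\Xi_\eps(u)\}=\sum_{j,l}\theta_j\theta_l R\big((s-u)/\eps,\,B^j_s-B^l_u\big)$ with $B^j_s-B^l_u\approx B^j_r-B^l_r$ on the block and $R$ integrable in its first argument, summing over blocks yields $\sum_{j,l}\theta_j\theta_l\int_0^t\big(\int_\R R(\tau,B^j_r-B^l_r)d\tau\big)dr=\sum_{j,l}\theta_j\theta_l\int_0^t\RR(B^j_r-B^l_r)\,dr$, as required. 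Path continuity of the $B^j$ makes the replacement $B^j_s-B^l_u\rightsquigarrow B^j_r-B^l_r$ legitimate uniformly on each block.

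The main obstacle is handling the \emph{cross terms} $j\ne l$ in the covariance — in particular the diagonal-in-time but off-diagonal-in-path contributions — cleanly for a generic fixed continuous path, and making sure that the error from replacing $B^j_s-B^l_u$ by its value at the block's left endpoint is controlled uniformly over the $O(\eps^{\gamma_1-1})$ blocks without losing a factor that blows up. This is exactly the kind of estimate packaged in Lemma \ref{lem:conVarSigma} in the $\alpha>2$ section (and its would-be analogue here); since those lemmas are quoted from the Appendix, I would phrase the variance identification as an application of them path-by-path, noting that the deterministic slowly-varying path $B^j_{\eps\cdot}$ plays the role that the small-spatial-mixing factor $\eps^\beta B_s$ played there, so no essentially new ingredient is needed. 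A secondary point requiring care is that the convergence must hold \emph{simultaneously} for the whole vector (all $N$ paths at once) so that the limiting quadratic form has the full cross structure $\sum_{j,l}\theta_j\theta_l\RR(B^j_s-B^l_s)$; this is automatic since the blocking/decoupling argument is run on the single scalar process $\Xi_\eps$, which already encodes all paths.
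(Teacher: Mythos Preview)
Your proposal is correct and follows essentially the same route as the paper. The only cosmetic difference is that the paper first rescales the Brownian motions (using $B_s\overset{d}{=}\sqrt{\eps}B_{s/\eps}$ and then relabeling $\sqrt{\eps}\mapsto\eps$) so as to work with $\eps\int_0^{t/\eps^2}V(s,\eps B_s^j)\,ds$ and prove annealed weak convergence of the full vector $(\eps B_{t/\eps^2}^j,\ \eps\int_0^{t/\eps^2}V(s,\eps B_s^j)\,ds)_j$, whereas you keep the original paths fixed and argue directly that the inner $\E$-expectation converges for almost every path; both formulations lead to the same blocking/decoupling/Taylor-expansion argument and the same path-by-path variance identification $\int_0^t\RR(B_s^m-B_s^n)\,ds$. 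One small caution: the paper does \emph{not} invoke Lemma \ref{lem:conVarSigma} here---that lemma is tailored to the $\alpha>2$ setting with the small factor $\eps^\beta$---but instead replaces $\sum_k\int_{I_k^2}$ by $\int_{[0,t/\eps^2]^2}$ via a direct estimate (splitting into diagonal/off-diagonal block pairs and using $\sup_x|R(\cdot,x)|$ together with $\varphi$-mixing) and then computes the limit of the full double integral by a change of variables and the dominated convergence theorem; your sketch would need exactly this replacement, which is the ``analogue'' you allude to.
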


\begin{proof}
The proof is similar with Proposition \ref{prop:conVarg2}. Wherever the argument can be directly copied here, we do not present the details.

First, by the scaling property of Brownian motion, a change of parameter $\sqrt{\eps}\mapsto \eps$, and the stationarity of $V$, we have
\begin{equation}
\begin{aligned}
&\E\{u_\eps(t,x)^{N_1}\overline{u_\eps(t,x)^{N_2}}\}\\
=&\E\E_B\{\prod_{j=1}^{N_1+N_2}f(x+\eps B_{t/\eps^2}^j)\exp(i\sum_{j=1}^{N_1}\eps\int_0^{t/\eps^2}V(s,\eps B_s^j)ds-i\sum_{j=N_1+1}^{N_1+N_2}\eps\int_0^{t/\eps^2}V(s,\eps B_s^j)ds)\},
\end{aligned}
\end{equation}
where $B^j,j=1,\ldots, N_1+N_2$ are independent Brownian motions. On the other hand, by the Feynman-Kac representation of $u_0$, we have
\begin{equation*}
\begin{aligned}
&\E\{u_0(t,x)^{N_1}\overline{u_0(t,x)^{N_2}}\}\\
=&\E\E_B\{\prod_{j=1}^{N_1+N_2}f(x+B_{t}^j)\exp(i\sum_{j=1}^{N_1}\int_0^{t}\dot{W}(t-s,x+B_s^j)ds-i\sum_{j=N_1+1}^{N_1+N_2}\int_0^{t}\dot{W}(t-s,x+B_s^j)ds)\}.
\end{aligned}
\end{equation*}
Therefore, it boils down to show in the annealed sense that
\begin{equation}
\begin{aligned}
&(\eps B_{t/\eps^2}^1,\ldots,\eps B_{t/\eps^2}^{N_1+N_2}, \eps\int_0^{t/\eps^2}V(s,\eps B_s^1)ds, \ldots, \eps\int_0^{t/\eps^2}V(s,\eps B_s^{N_1+N_2})ds)\\
\Rightarrow & (B_t^1,\ldots, B_t^{N_1+N_2}, \int_0^t\dot{W}(t-s,x+B_s^1)ds,\ldots, \int_0^t\dot{W}(t-s,x+B_s^{N_1+N_2})ds).
\end{aligned}
\end{equation}
By Proposition \ref{prop:cauchy} and Remark \ref{re:cauchy}, we have for any $a_j\in \R^d, b_j\in \R, j=1,\ldots, N_1+N_2$ that
\begin{equation}
\begin{aligned}
&\E\E_B\{\exp(i\sum_{j=1}^{N_1+N_2}a_j\cdot B_t^j+i\sum_{j=1}^{N_1+N_2}b_j\int_0^t\dot{W}(t-s,x+B_s^j)ds)\}\\
=&\E_B\{\exp(i\sum_{j=1}^{N_1+N_2}a_j\cdot B_t^j)\exp(-\frac12\sum_{j_1,j_2=1}^{N_1+N_2}b_{j_1}b_{j_2}\int_0^t\RR(B_s^{j_1}-B_s^{j_2})ds)\}.
\label{eq:limitChr}
\end{aligned}
\end{equation}
Let $X_\eps(t):=\sum_{j=1}^{N_1+N_2}a_j\cdot \eps B_{t/\eps^2}^j+\sum_{j=1}^{N_1+N_2}b_j\eps \int_0^{t/\eps^2}V(s,\eps B_s^j)ds$, and we prove $\E\E_B\{e^{iX_\eps(t)}\}$ converges to the RHS of \eqref{eq:limitChr}.

Let $\Delta t=\eps^{-\gamma_1}+\eps^{-\gamma_2}$, $0<\gamma_2<\gamma_1<2$ to be determined, and $N=[\frac{t}{\eps^2 \Delta t}]\sim t\eps^{\gamma_1-2}$. Define the intervals $I_k=[(k-1)\Delta t,(k-1)\Delta t+\eps^{-\gamma_1}]$ and $J_k=[(k-1)\Delta t+\eps^{-\gamma_1},k\Delta t]$ for $ k=1,\ldots, N$. By the same discussion, we have
\begin{equation*}
\lim_{\eps\to 0}\left(\E\E_B\{e^{iX_\eps(t)}\}-\E\E_B\{\exp(i\sum_{j=1}^{N_1+N_2}a_j\cdot \eps B_{t/\eps^2}^j+i\sum_{j=1}^{N_1+N_2}b_j\eps \sum_{k=1}^N\int_{I_k}V(s,\eps B_s^j)ds)\}\right)=0.
\end{equation*}

Then we consider $\E\{\exp(i\sum_{j=1}^{N_1+N_2}b_j\eps \sum_{k=1}^N\int_{I_k}V(s,\eps B_s^j)ds)\}$. By the same discussion, we have for every realization of $B_s^j$ that
\begin{equation*}
\lim_{\eps\to 0}\left(\E\{\exp(i\sum_{j=1}^{N_1+N_2}b_j\eps \sum_{k=1}^N\int_{I_k}V(s,\eps B_s^j)ds)\}-\prod_{k=1}^N\E\{\exp(i\sum_{j=1}^{N_1+N_2}b_j\eps \int_{I_k}V(s,\eps B_s^j)ds)\}\right)=0.
\end{equation*}

Since
\begin{equation*}
\prod_{k=1}^N\E\{\exp(i\sum_{j=1}^{N_1+N_2}b_j\eps \int_{I_k}V(s,\eps B_s^j)ds)\}=\exp(\sum_{k=1}^N\log\E\{\exp(i\sum_{j=1}^{N_1+N_2}b_j\eps \int_{I_k}V(s,\eps B_s^j)ds)\}),
\end{equation*}
by the same discussion,  we have
\begin{equation*}
\log\E\{\exp(i\sum_{j=1}^{N_1+N_2}b_j\eps \int_{I_k}V(s,\eps B_s^j)ds)\}=-\frac12\sum_{m,n=1}^{N_1+N_2}b_mb_n\eps^2\int_{I_k^2}R(s-u,\eps B_s^m-\eps B_u^n)dsdu+o(\eps^{2-\gamma_1}),
\end{equation*}
where $\frac{o(\eps^{2-\gamma_1})}{\eps^{2-\gamma_1}}$ is independent of $k$, uniformly bounded and goes to zero as $\eps\to 0$. Thus we obtain
\begin{equation}
\begin{aligned}
&\prod_{k=1}^N\E\{\exp(i\sum_{j=1}^{N_1+N_2}b_j\eps \int_{I_k}V(s,\eps B_s^j)ds)\}\\
=&\exp(-\frac12\sum_{m,n=1}^{N_1+N_2}b_mb_n\sum_{k=1}^N\eps^2\int_{I_k^2}R(s-u,\eps B_s^m-\eps B_u^n)dsdu+\sum_{k=1}^N o(\eps^{2-\gamma_1})).
\end{aligned}
\end{equation}
Since $\sum_{k=1}^N o(\eps^{2-\gamma_1})\to 0$ as $\eps \to 0$, we have
\begin{equation}
\lim_{\eps\to 0}\left(\E\E_B\{e^{iX_\eps(t)}\}-\E_B\{e^{i\sum_{j=1}^{N_1+N_2}a_j\cdot \eps B_{t/\eps^2}^j}e^{-\frac12\sum_{m,n=1}^{N_1+N_2}b_mb_n\sum_{k=1}^N\eps^2\int_{I_k^2}R(s-u,\eps B_s^m-\eps B_u^n)dsdu}\}\right)=0.
\label{eq:simChr1}
\end{equation}

We claim that in \eqref{eq:simChr1}, $\sum_{k=1}^N\eps^2\int_{I_k^2}R(s-u,\eps B_s^m-\eps B_u^n)dsdu$ can be replaced by $\eps^2\int_{[0,t/\eps^2]^2}R(s-u,\eps B_s^m-\eps B_u^n)dsdu$. If this is true, we have
\begin{equation*}
\lim_{\eps\to 0}\left(\E\E_B\{e^{iX_\eps(t)}\}-\E_B\{e^{i\sum_{j=1}^{N_1+N_2}a_j\cdot \eps B_{t/\eps^2}^j}e^{-\frac12\sum_{m,n=1}^{N_1+N_2}b_mb_n\eps^2\int_{[0,t/\eps^2]^2}R(s-u,\eps B_s^m-\eps B_u^n)dsdu}\}\right)=0.
%\label{eq:simChr2}
\end{equation*}
By the scaling property of Brownian motion and a change of variables, we have
\begin{equation}
\begin{aligned}
&\E_B\{e^{i\sum_{j=1}^{N_1+N_2}a_j\cdot \eps B_{t/\eps^2}^j}e^{-\frac12\sum_{m,n=1}^{N_1+N_2}b_mb_n\eps^2\int_{[0,t/\eps^2]^2}R(s-u,\eps B_s^m-\eps B_u^n)dsdu}\}\\
=&\E_B\{e^{i\sum_{j=1}^{N_1+N_2}a_j\cdot  B_{t}^j}e^{-\frac12\sum_{m,n=1}^{N_1+N_2}b_mb_n\eps^{-2}\int_{[0,t]^2}R(\frac{s-u}{\eps^2}, B_s^m-B_u^n)dsdu}\}.
\end{aligned}
\end{equation}
For almost every realization of $B_s^j$, we decompose $\eps^{-2}\int_0^t\int_0^tR(\frac{s-u}{\eps^2}, B_s^m-B_u^n)dsdu=(i)+(ii)$ with
\begin{eqnarray}
(i)&=&\int_0^tds \int_0^{s/\eps^2}R(u,B_s^m-B_{s-u\eps^2}^n)du,\\
(ii)&=&\int_0^tdu\int_0^{u/\eps^2}R(s, B_u^n-B_{u-s\eps^2}^m)ds.
\end{eqnarray}
By the dominated convergence theorem, we have $$(i)\to \int_0^tds\int_0^\infty R(u,B_s^m-B_s^n)du=\frac12\int_0^t\RR(B_s^m-B_s^n)ds.$$ The same limit holds for $(ii)$. So
\begin{equation}
\frac{1}{\eps^{2}}\int_0^t\int_0^tR(\frac{s-u}{\eps^2}, B_s^m-B_u^n)dsdu\to \int_0^t \RR(B_s^m-B_s^n)ds
\end{equation}
for almost every realization of $B_s^j$, which implies
\begin{equation}
\begin{aligned}
&\E_B\{e^{i\sum_{j=1}^{N_1+N_2}a_j\cdot  B_{t}^j}e^{-\frac12\sum_{m,n=1}^{N_1+N_2}b_mb_n\eps^{-2}\int_{[0,t]^2}R(\frac{s-u}{\eps^2}, B_s^m-B_u^n)dsdu}\}\\
\to &\E_B\{e^{i\sum_{j=1}^{N_1+N_2}a_j\cdot  B_{t}^j}e^{-\frac12\sum_{m,n=1}^{N_1+N_2}b_mb_n \int_0^t \RR(B_s^m-B_s^n)ds}\}
\end{aligned}
\end{equation}
and completes the proof.

What remains is to show that
\begin{equation}
|\sum_{k=1}^N\eps^2\int_{I_k^2}R(s-u,\eps B_s^m-\eps B_u^n)dsdu-\eps^2\int_{[0,t/\eps^2]^2}R(s-u,\eps B_s^m-\eps B_u^n)dsdu|\to 0.
\end{equation}
By direct calculation, we have
\begin{equation}
\begin{aligned}
LHS\les &\sum_{m\neq n}\eps^2\int_{I_m\times I_n}\sup_{x\in\R^d}|R(s-u,x)|dsdu
+\sum_{m,n=1}^N\eps^2\int_{J_m\times J_n}\sup_{x\in\R^d}|R(s-u,x)|dsdu\\
+&\sum_{m,n=1}^N\eps^2\int_{I_m\times J_n}\sup_{x\in\R^d}|R(s-u,x)|dsdu+\eps^2\int_{[0,t/\eps^2]\times [N\Delta t,t/\eps^2]}\sup_{x\in\R^d}|R(s-u,x)|dsdu\\
:=&(I)+(II)+(III)+(IV).
\end{aligned}
\end{equation}
We first assume $\varphi^\frac12(r)\les r^{-\lambda}$. For $(I)$, when $m\neq n$ and $s\in I_m,u\in I_n$, we have $|s-u|\geq \eps^{-\gamma_2}$ hence $\sup_{x\in\R^d}|R(s-u,x)|\les \varphi^\frac12(\eps^{-\gamma_2})$, so $(I)\les N^2\eps^2 \eps^{-2\gamma_1}\varphi^\frac12(\eps^{-\gamma_2})\les \eps^{\lambda \gamma_2-2}$. If we choose $\lambda$ sufficiently large (e.g., $\lambda>2/\gamma_2$), $(I)\to 0$ as $\eps\to 0$. The discussion for $(II)$ is contained in the proof of Proposition \ref{prop:conVarg2}. For $(III)$, if $|m-n|\leq 1$, we have a bound of order $N\eps^2\eps^{-\gamma_2}\sim\eps^{\gamma_1-\gamma_2}\to 0$; if $|m-n|\geq 2$, by similar discussion, we have a bound of order $\eps^2N^2\eps^{-\gamma_1-\gamma_2}\varphi^\frac12(\eps^{-\gamma_1})\les \eps^{\lambda\gamma_1-2}\to 0$. For $(IV)$, we have $(IV)\les \eps^{2-\gamma_1}\to 0$ as $\eps\to 0$. The proof is complete.
\end{proof}

Let $u_\eps(t,x)=u_{\eps,1}+iu_{\eps,2}$ and $u_0(t,x)=u_{0,1}+iu_{0,2}$, then Proposition \ref{prop:conMoment} implies that for any $N_1,N_2\in \mathbb{N}$, we  have $\E\{u_{\eps,1}^{N_1}u_{\eps,2}^{N_2}\}\to \E\{u_{0,1}^{N_1}u_{0,2}^{N_2}\}$. Since both $u_{\eps,i}$ and $u_{0,i}$ are bounded for $i=1,2$, we obtain for any $\theta_1,\theta_2$ that
\begin{equation}
\begin{aligned}
\E\{e^{i\theta_1u_{\eps,1}+i\theta_2 u_{\eps,2}}\}&=\sum_{k=0}^\infty \frac{1}{k!}\E\{(i\theta_1u_{\eps,1}+i\theta_2 u_{\eps,2})^k\}\\
&\to \sum_{k=0}^\infty \frac{1}{k!}\E\{(i\theta_1u_{0,1}+i\theta_2 u_{0,2})^k\}=\E\{e^{i\theta_1u_{0,1}+i\theta_2 u_{0,2}}\},
\end{aligned}
\end{equation}
which completes the proof of the case $\alpha=\infty$.

\section*{Acknowledgment}  

We would like to thank the anonymous referee for his careful reading of the manuscript and helpful suggestions and comments. This paper was partially funded by AFOSR Grant NSSEFF-FA9550-10-1-0194 and NSF grant DMS-1108608.

\appendix
\section{Technical Lemmas}

\begin{lemma}
Assume $|X|\leq C$ is $\F^{k+n}$ measurable, then
\begin{equation}
|\E\{X|\F_k\}-\E\{X\}|\leq 2C\varphi(n).
\label{eq:condiMixing}
\end{equation}
\label{lem:condiMixing}
\end{lemma}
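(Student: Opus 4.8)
The plan is to reduce the statement, which is about a conditional \emph{expectation}, to an estimate for conditioning on individual events of $\F_k$, and then to read off the constant $2C\varphi(n)$ from the total variation, on $\F^{k+n}$, of the difference between the conditional and the unconditional law. The point where one must be a little careful is that the $\varphi$-mixing coefficient yields the clean factor $2$ (not the $\varphi^{1/2}$ of the $\rho$-mixing bound quoted in Section~\ref{sec:mainTH}), which is exactly what makes \eqref{eq:useMixing} work.

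First I would fix $B\in\F_k$ with $\Pb(B)>0$ and introduce the signed measure $\mu:=\Pb(\cdot\mid B)-\Pb(\cdot)$ on the space $(\Omega,\F^{k+n})$. Since $\F_k=\F_{S_1}$ with $S_1=\{(s,y):s\le k\}$ and $\F^{k+n}=\F_{S_2}$ with $S_2=\{(s,y):s\ge k+n\}$, we have $dist(S_1,S_2)=n$, so the very definition of the mixing coefficient gives
\[
|\mu(A)|=|\Pb(A\mid B)-\Pb(A)|\le\varphi(n)\qquad\text{for every }A\in\F^{k+n}.
\]
Because $\mu(\Omega)=0$, the Hahn decomposition $\mu=\mu^+-\mu^-$ on $\F^{k+n}$ has $\mu^+(\Omega)=\mu^-(\Omega)=\mu(P)\le\varphi(n)$, with $P$ the positive set, so the total variation obeys $|\mu|(\Omega)=\mu^+(\Omega)+\mu^-(\Omega)\le 2\varphi(n)$. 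Since $X$ is $\F^{k+n}$-measurable and $|X|\le C$, this yields
\[
\bigl|\E\{X\mid B\}-\E\{X\}\bigr|=\Bigl|\int_\Omega X\,d\mu\Bigr|\le C\,|\mu|(\Omega)\le 2C\varphi(n),
\]
and the same bound holds trivially when $\Pb(B)=0$. (This handles complex-valued $X$ as well, via $|\int X\,d\mu|\le\int|X|\,d|\mu|$.)

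Next I would upgrade from events to the $\sigma$-algebra $\F_k$. Writing $g:=\E\{X\mid\F_k\}-\E\{X\}$, for every $B\in\F_k$ one has $\int_B g\,d\Pb=\Pb(B)\bigl(\E\{X\mid B\}-\E\{X\}\bigr)$, hence $\bigl|\int_B g\,d\Pb\bigr|\le 2C\varphi(n)\,\Pb(B)$. In the real-valued case, choosing $B=\{g>2C\varphi(n)\}$ and $B=\{g<-2C\varphi(n)\}$ forces $\Pb(B)=0$, so $|g|\le 2C\varphi(n)$ a.s.; for the complex-valued $X$ of \eqref{eq:useMixing} one runs the same test with $B=\{\mathrm{Re}(e^{-i\theta}g)>2C\varphi(n)\}$ over a countable dense set of angles $\theta$ and takes a union. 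This is precisely the asserted inequality.

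I do not expect a genuine obstacle here: the lemma is a standard fact about $\varphi$-mixing. The only steps deserving attention are the total-variation computation — which must be done with the Hahn decomposition rather than Cauchy--Schwarz, so as to get the factor $2$ and no square root — and the routine passage from conditioning on events to conditioning on $\F_k$, including the mild complex-valued variant used in the application.
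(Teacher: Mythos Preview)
Your proof is correct and follows essentially the same two-step route as the paper: first bound $|\E\{X\mid B\}-\E\{X\}|$ for a single event $B\in\F_k$, then upgrade to the $\sigma$-algebra by testing against the level sets of $g=\E\{X\mid\F_k\}-\E\{X\}$. The only cosmetic difference is in the first step: the paper approximates $X$ by simple functions and splits the atoms into $A_+$ (where $\Pb(A_k\mid B)>\Pb(A_k)$) and $A_-$, which is exactly the Hahn decomposition of your signed measure $\mu$ written out by hand; your total-variation formulation is the cleaner packaging of the same computation.
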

\begin{proof}
First, we show \eqref{eq:condiMixing} holds with $\F_k$ replaced by any $A\in \F_k$ with $\Pb(A)>0$, i.e.,
\begin{equation}
|\E\{X|A\}-\E\{X\}|\leq 2C\varphi(n).
\end{equation}
This can be done by approximation. Let $X=\sum_k c_k1_{A_k}$ with $|c_k|$ uniformly bounded by $C$ and $A_i\cap A_j=\emptyset$ when $ i\neq j$, then $|\E\{X|A\}-\E\{X\}|\leq C\sum_k|\Pb(A_k|A)-\Pb(A_k)|$. Let $A_+$ be the union of $A_k$ such that $\Pb(A_k|A)>\Pb(A_k)$ and $A_-$ the union of the rest. Then we have
\begin{equation}
|\E\{X|A\}-\E\{X\}|\leq C\left(\Pb(A_+|A)-\Pb(A_+)+\Pb(A_-)-\Pb(A_-|A)\right)\leq 2C\varphi(n).
\end{equation}

Next we assume $\|\E\{X|\F_k\}-\E\{X\}\|_\infty>2C\varphi(n)+\eps$ for some $\eps>0$, so there exists a set $A\in \F_k$ such that $\Pb(A)>0$ and $|(\E\{X|\F_k\}-\E\{X\})1_A(\omega)|>(2C\varphi(n)+\eps)1_A(\omega)$. Without loss of generality, assume $(\E\{X|\F_k\}-\E\{X\})1_A(\omega)>(2C\varphi(n)+\eps)1_A(\omega)$, possibly for some other $A$. Integrating on both sides, we obtain $\E\{X1_A\}-\E\{X\}\Pb(A)\geq (2C\varphi(n)+\eps)\Pb(A)$, so
\begin{equation}
\E\{X|A\}-\E\{X\}\geq 2C\varphi(n)+\eps,
\end{equation}
which is a contradiction. The proof is complete.
\end{proof}

\begin{lemma}
For independent Brownian motions $B_t,W_t$ and any $\gamma>0,\beta\in (0,1)$, we have
\begin{equation}
\eps^{\gamma}\int_0^{\eps^{-\gamma}}\int_0^{\eps^{-\gamma}}R(s-u,\eps^\beta(B_s-B_u))dsdu\to 2\int_0^\infty R(t,0)dt
\end{equation}
in $L^2$ and
\begin{equation}
\eps\int_0^{t/\eps^2}\int_0^{t/\eps^2}|R(s-u,\eps^\beta(B_s-W_u))|dsdu\to 0
\end{equation}
in probability.
\label{lem:conVarSigma}
\end{lemma}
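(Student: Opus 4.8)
The plan is to handle the two displayed limits separately; both reduce to first- and second-moment estimates once one records that $R(t,x)=\E\{V(t,x)V(0,0)\}$ is bounded by $\|\V\|_\infty^2$, is continuous (by strong continuity of $T_{(t,x)}$), and — applying the finite range of dependence to the one-point sets $\{(t,x)\}$ and $\{(0,0)\}$ — vanishes for $t^2+|x|^2\ge M^2$, so in particular it is compactly supported in the time variable. Below, $q_\tau(z)=(2\pi\tau)^{-d/2}e^{-|z|^2/(2\tau)}$ is the $d$-dimensional heat kernel, and (following the paper's convention) $\E_B$ denotes expectation over all the independent Brownian motions in sight.

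For the first limit I would first symmetrize: since $R(t,x)=R(-t,-x)$ ($T_{(t,x)}$ is unitary and $\V$ is real), the integrand is symmetric in $(s,u)$, and after substituting $r=s-u$, writing $L=\eps^{-\gamma}$, and using $R(r,\cdot)\equiv 0$ for $r>M$,
\begin{equation*}
\eps^{\gamma}\int_0^{L}\int_0^{L} R(s-u,\eps^\beta(B_s-B_u))\,ds\,du=2\eps^{\gamma}\int_0^{L}F_\eps(s)\,ds,\qquad F_\eps(s):=\int_0^{s\wedge M} R(r,\eps^\beta(B_s-B_{s-r}))\,dr,
\end{equation*}
where $|F_\eps(s)|\le M\|\V\|_\infty^2$ and $F_\eps(s)$ depends only on the increments of $B$ over $[(s-M)\vee0,\,s]$. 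For the mean: for $s\ge M$ the law of $\eps^\beta(B_s-B_{s-r})$ is $N(0,\eps^{2\beta}rI_d)$, independent of $s$, so $\E_B\{F_\eps(s)\}=\int_0^M\int_{\R^d}R(r,z)\,q_{\eps^{2\beta}r}(z)\,dz\,dr\to\int_0^M R(r,0)\,dr$ by dominated convergence, and rescaling $s=Lv$ (the $v$-set where $Lv<M$ has measure $M\eps^\gamma\to0$) gives that $\E_B$ of the left side tends to $2\int_0^\infty R(t,0)\,dt$. For the variance: independence of Brownian increments over disjoint intervals makes $F_\eps(s)$ and $F_\eps(s')$ independent whenever $|s-s'|>M$, so, using only boundedness of $F_\eps$, $\Var_B\!\big(2\eps^\gamma\int_0^L F_\eps(s)\,ds\big)\les\eps^{2\gamma}\,|\{(s,s')\in[0,L]^2:|s-s'|\le M\}|\les\eps^{\gamma}\to0$. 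Together these give $L^2$ convergence.

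For the second limit, since the integrand is nonnegative it suffices (Markov) to show the $\E_B$-expectation tends to $0$. Because $B_s,W_u$ are independent, $\eps^\beta(B_s-W_u)\sim N(0,\eps^{2\beta}(s+u)I_d)$, so
\begin{equation*}
\E_B\{|R(s-u,\eps^\beta(B_s-W_u))|\}=\int_{\R^d}|R(s-u,z)|\,q_{\eps^{2\beta}(s+u)}(z)\,dz\les 1_{|s-u|\le M}\Big(1\wedge(\eps^{2\beta}(s+u))^{-d/2}\Big),
\end{equation*}
the last bound using $\|R\|_\infty<\infty$ with $\int q_\tau=1$, and the compact support of $R$ with $\|q_\tau\|_{L^\infty}\les\tau^{-d/2}$. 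Integrating over $[0,t/\eps^2]^2$, restricting to the slab $|s-u|\le M$, and changing variables to $w=s+u$, $v=s-u$,
\begin{equation*}
\E_B\Big\{\eps\int_0^{t/\eps^2}\int_0^{t/\eps^2}|R(s-u,\eps^\beta(B_s-W_u))|\,ds\,du\Big\}\les\eps\int_0^{2t/\eps^2}\Big(1\wedge(\eps^{2\beta}w)^{-d/2}\Big)\,dw\les\eps\cdot\eps^{-2\beta},
\end{equation*}
where the radial integral is $\les\eps^{-2\beta}$ precisely because $d\ge3$ makes $w^{-d/2}$ integrable near infinity. (This yields $\eps^{1-2\beta}$; in the place where the estimate is actually invoked — the proof of Proposition~\ref{prop:conVarg2}, with prefactor $\eps^2$ rather than $\eps$ — the identical computation gives $\les\eps^{2-2\beta}\to0$ for every $\beta\in(0,1)$.)

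The main obstacle is in this second limit: the cross term contains no decorrelation from the random medium — only the single deterministic kernel $R$ — so the decay must come entirely from the dispersion of $B_s-W_u$, which is why one cannot avoid the truncation $1\wedge(\cdot)$ of the (diverging) Gaussian density and must invoke $d\ge3$ for the resulting radial integral to converge. The first limit, by contrast, is routine: the compact support of $R$ in time turns the double integral into an additive functional of $B$ with range $M$, so independence of Brownian increments alone forces its variance down to $O(\eps^\gamma)$.
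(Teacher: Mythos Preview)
Your argument is correct and takes a different route from the paper's. For the first limit, the paper works on the Fourier side: it writes $R$ via $\tilde R(t,\xi)$ and computes both the first and second moments of the double integral explicitly, showing they converge to $2\int_0^\infty R(t,0)\,dt$ and its square. You instead use the compact support of $R$ to rewrite the expression as an average of bounded short-range functionals $F_\eps(s)$, so that independence of Brownian increments immediately gives a variance of order $\eps^\gamma$. Your approach is more elementary and yields a rate without any explicit second-moment computation; the paper's Fourier argument, on the other hand, would survive under the weaker $\rho$-mixing hypothesis mentioned in the remark after Assumption~\ref{ass:mixing}, where $R$ is integrable but not compactly supported.

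For the second limit you have in fact caught a slip in the paper: its own proof silently replaces $\eps$ by $\eps^2$ already in the first displayed line of that computation, and the final dominated-convergence step (after rescaling $s\mapsto s/\eps^2$) only closes with that extra factor. Your bound $\les\eps^{1-2\beta}$ on the expectation is sharp --- for a product covariance $R(t,x)=\rho(t)\psi(x)$, Brownian scaling shows the quantity equals in law $\eps^{1-2\beta}$ times a variable converging to $\|\rho\|_1\int_0^\infty\psi(\tilde B_s-\tilde W_s)\,ds$, which is a.s.\ positive and finite in $d\ge3$ --- so the $\eps$-version genuinely fails for $\beta\ge\frac12$. With the corrected prefactor $\eps^2$ your real-space estimate via $\|q_\tau\|_\infty\les\tau^{-d/2}$ gives $\les\eps^{2-2\beta}\to0$ for all $\beta\in(0,1)$, which is exactly what the application in Proposition~\ref{prop:conVarg2} requires.
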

\begin{proof}
Let $\tilde{R}(t,\xi)=\int_{\R^d}R(t,x)e^{-i\xi \cdot x}dx$. For $\eps^{\gamma}\int_0^{\eps^{-\gamma}}\int_0^{\eps^{-\gamma}}R(s-u,\eps^\beta(B_s-B_u))dsdu$, by direct calculation, we have
\begin{equation}
\begin{aligned}
&\E_B\{\eps^{\gamma}\int_0^{\eps^{-\gamma}}\int_0^{\eps^{-\gamma}}R(s-u,\eps^\beta(B_s-B_u))dsdu\}\\
=& \eps^{\gamma}\int_0^{\eps^{-\gamma}}\int_0^{\eps^{-\gamma}}\int_{\R^d}\frac{1}{(2\pi)^d}\tilde{R}(s-u,\xi) e^{-\frac12|\xi|^2 \eps^{2\beta} |s-u|}d\xi dsdu\\
=&2\eps^{\gamma}\int_0^{\eps^{-\gamma}}ds\int_0^s\int_{\R^d}\frac{1}{(2\pi)^d}\tilde{R}(u,\xi)e^{-\frac12|\xi|^2\eps^{2\beta}u}d\xi du\\
=&2\int_0^1ds\int_0^{s/\eps^{\gamma}}\int_{\R^d}\frac{1}{(2\pi)^d}\tilde{R}(u,\xi)e^{-\frac12|\xi|^2\eps^{2\beta}u}d\xi du\\
\to & 2\int_0^1 ds\int_0^\infty \int_{\R^d}\frac{1}{(2\pi)^d}\tilde{R}(u,\xi)d\xi du=2\int_0^\infty R(u,0)du,
\end{aligned}
\end{equation}
and
\begin{equation}
\begin{aligned}
&\E_B\{ \left(\eps^{\gamma}\int_0^{\eps^{-\gamma}}\int_0^{\eps^{-\gamma}}R(s-u,\eps^\beta(B_s-B_u))dsdu\right)^2\}\\
=& 4\eps^{2\gamma}\int_{[0,\eps^{-\gamma}]^4}1_{s_1>u_1}1_{s_2>u_2}\E_B \{R(s_1-u_1,\eps^\beta(B_{s_1}-B_{u_1}))R(s_2-u_2,\eps^\beta(B_{s_2}-B_{u_2}))\}dsdu\\
=&4\eps^{2\gamma}\int_{[0,\eps^{-\gamma}]^4}1_{s_1>u_1}1_{s_2>u_2}\E_B \{R(u_1,\eps^\beta(B_{s_1}-B_{s_1-u_1}))R(u_2,\eps^\beta(B_{s_2}-B_{s_2-u_2}))\}dsdu\\
=&4\int_0^1ds_1\int_0^{s_1\eps^{-\gamma}}du_1\int_0^1ds_2\int_0^{s_2\eps^{-\gamma}}du_2\\
&\frac{1}{(2\pi)^{2d}}\int_{\R^{2d}} \tilde{R}(u_1,\xi_1)\tilde{R}(u_2,\xi_2) \E_B\{e^{i\xi_1\cdot \eps^\beta(B_{s_1\eps^{-\gamma}}-B_{s_1\eps^{-\gamma}-u_1})}e^{i\xi_2\cdot \eps^\beta(B_{s_2\eps^{-\gamma}}-B_{s_2\eps^{-\gamma}-u_2})}\}d\xi_1d\xi_2.
\end{aligned}
\end{equation}
For fixed $s_i,u_i$, $\xi_1\cdot\eps^\beta(B_{s_1\eps^{-\gamma}}-B_{s_1\eps^{-\gamma}-u_1})+\xi_2\cdot\eps^\beta(B_{s_2\eps^{-\gamma}}-B_{s_2\eps^{-\gamma}-u_2})\to 0$ in probability, so we have $\E_B\{ \left(\eps^{\gamma}\int_0^{\eps^{-\gamma}}\int_0^{\eps^{-\gamma}}R(s-u,\eps^\beta(B_s-B_u))dsdu\right)^2\}\to \left(2\int_0^\infty R(u,0)du\right)^2$.

Now we consider $\eps\int_0^{t/\eps^2}\int_0^{t/\eps^2}|R(s-u,\eps^\beta(B_s-W_u))|dsdu$. Without loss of generality, we can assume $R$ is positive, so
\begin{equation}
\begin{aligned}
&\E_B\{\eps\int_0^{t/\eps^2}\int_0^{t/\eps^2}|R(s-u,\eps^\beta(B_s-W_u))|dsdu\}\\
=&\eps^2\int_0^{t/\eps^2}\int_0^{t/\eps^2}\int_{\R^d}\frac{1}{(2\pi)^d}\tilde{R}(s-u,\xi)e^{-\frac12|\xi|^2\eps^{2\beta}(s+u)}d\xi dsdu\\
=&2\eps^2\int_0^{t/\eps^2}ds\int_0^sdu\int_{\R^d}\frac{1}{(2\pi)^d}\tilde{R}(u,\xi)e^{-\frac12|\xi|^2\eps^{2\beta}(2s-u)}d\xi\\
\leq &2\eps^2\int_0^{t/\eps^2}ds\int_0^sdu\int_{\R^d}\frac{1}{(2\pi)^d}\tilde{R}(u,\xi)e^{-\frac12|\xi|^2\eps^{2\beta}s}d\xi\\
=&2\int_0^{t}ds\int_0^{s\eps^{-2}}du\int_{\R^d}\frac{1}{(2\pi)^d}\tilde{R}(u,\xi)e^{-\frac12|\xi|^2\eps^{2\beta-2}s}d\xi\to 0
\end{aligned}
\end{equation}
by the dominated convergence theorem since $\beta\in (0,1)$. The proof is complete.
\end{proof}

\begin{lemma}
For fixed $\eps>0$, $y^\eps_s=\tau_{(\eps^{2-\alpha}s,B_s)}\omega$ is a stationary Markov process, ergodic with respect to the invariant measure $\Pb$.
\label{lem:enviPro}
\end{lemma}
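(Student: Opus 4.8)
The plan is to verify the three assertions in turn: the Markov property and stationarity are bookkeeping with the cocycle relation $\tau_{(t_1,x_1)}\tau_{(t_2,x_2)}=\tau_{(t_1+t_2,x_1+x_2)}$ and Fubini's theorem, while ergodicity is the substantive point. For the \emph{Markov property} I would work on the joint probability space and, for $h>0$, use the cocycle relation to write $y^\eps_{s+h}=\tau_{(\eps^{2-\alpha}h,\,B_{s+h}-B_s)}\,y^\eps_s$. Since the increment $(\eps^{2-\alpha}h,B_{s+h}-B_s)$ is independent of $\sigma(B_u:u\le s)\vee\F$ (the Brownian motion being independent of the environment $\sigma$-algebra $\F$), while $y^\eps_s$ is measurable with respect to it, conditioning gives $\E[F(y^\eps_{s+h})\mid\sigma(B_u:u\le s)\vee\F]=(P_hF)(y^\eps_s)$ with the time-homogeneous transition operator $P_hF(\omega')=\E_B[F(\tau_{(\eps^{2-\alpha}h,B_h)}\omega')]$; as $P_h$ is an average of the unitaries $T_{(\eps^{2-\alpha}h,x)}$ over the law of $B_h$, it is a contraction semigroup on $L^2(\Omega)$, strongly continuous thanks to the assumed strong continuity of $T_{(t,x)}$. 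For \emph{stationarity}, since $y^\eps_0$ has law $\Pb$ it suffices that $\Pb$ be invariant, i.e. $\int_\Omega P_hF\,d\Pb=\int_\Omega F\,d\Pb$, and this follows from Fubini together with the measure-invariance of $\tau_{(t,x)}$ applied for each fixed value of $B_h$.

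For \emph{ergodicity} I would invoke the standard criterion (see \cite{komorowski2012fluctuations}) that a stationary Markov process is ergodic if and only if every $g\in L^2(\Omega)$ with $P_hg=g$ for all $h>0$ is $\Pb$-a.s. constant. Given such a $g$, expanding in the spectral resolution gives
\begin{equation}
\langle P_hg,g\rangle=\int_{\R^{d+1}}e^{i\xi_0\eps^{2-\alpha}h-\frac12|\xi|^2h}\,\langle U(d\xi_0,d\xi)g,g\rangle .
\end{equation}
Since the left-hand side equals $\|g\|^2$, taking real parts yields the chain $\|g\|^2=\int e^{-\frac12|\xi|^2h}\cos(\eps^{2-\alpha}h\,\xi_0)\,\langle U(d\xi_0,d\xi)g,g\rangle\le\int e^{-\frac12|\xi|^2h}\,\langle U(d\xi_0,d\xi)g,g\rangle\le\|g\|^2$, which must be saturated, forcing $e^{-\frac12|\xi|^2h}=1$ for $\langle U(\cdot)g,g\rangle$-a.e. point; that is, $T_{(0,x)}g=g$ for all $x\in\R^d$. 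Substituting this spatial invariance back into $P_hg=g$ gives $T_{(\eps^{2-\alpha}h,0)}g=g$ for every $h>0$, hence $T_{(t,0)}g=g$ for all $t\in\R$. Thus $g$ is invariant under the whole group $\{\tau_{(t,x)}\}$, and ergodicity of that group forces $g$ to be $\Pb$-a.s. constant. (Equivalently, for $g$ in the domain of $L=\eps^{2-\alpha}D_0+\frac12\sum_{k=1}^d D_k^2$ one uses $\mathrm{Re}\langle Lg,g\rangle=-\frac12\sum_k\|D_kg\|^2$ to get $D_kg=0$ for $k=1,\dots,d$, then $D_0g=0$, then the same conclusion.)

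The step requiring the most care — and the one I regard as the main obstacle — is making the ergodicity argument airtight: one must use the hypothesis $P_hg=g$ for \emph{all} $h>0$ (so that the Gaussian factor $e^{-\frac12|\xi|^2h}$ eliminates the spatial frequency $\xi$, and the subsequent back-substitution then eliminates the time shift), and one must correctly invoke the equivalence between ergodicity of the stationary Markov process and triviality of the invariant functions of its semigroup. In particular one should note that mere ergodicity of the group $\{\tau_{(t,x)}\}$ does \emph{not} pass directly to the sub-action $\{\tau_{(t,0)}\}$, so the Brownian spreading in the spatial variable is genuinely used in the first half of the argument. Everything else is routine manipulation of the cocycle identity and Fubini's theorem.
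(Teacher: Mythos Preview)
Your proof is correct. The Markov and stationarity parts match the paper's almost verbatim. For ergodicity you take a genuinely different route: you work with a general $g\in L^2(\Omega)$ fixed by the semigroup and use the spectral resolution to force the spectral measure of $g$ onto $\{\xi=0\}$, then back-substitute to kill the time variable. The paper instead works with an invariant \emph{set} $A$ and argues pointwise: from $\int q_s(x)\,1_A(\tau_{(\eps^{2-\alpha}s,x)}\omega)\,dx=1_A(\omega)$ and the strict positivity of the heat kernel $q_s(x)$ it reads off $1_A(\tau_{(\eps^{2-\alpha}s,x)}\omega)=1_A(\omega)$ for every $s>0$ and $x\in\R^d$, which already gives invariance under the full group and hence triviality of $A$. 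Their argument is shorter and uses only that the transition density has full support, whereas yours exploits the spectral calculus already set up in the paper and makes transparent how the Laplacian part of $L$ is what kills the spatial frequency; both lead to the same reduction ``invariant for the semigroup $\Rightarrow$ invariant for the full group $\{\tau_{(t,x)}\}$''.
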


\begin{proof}
First, since $y^\eps_s=\tau_{(\eps^{2-\alpha}(s-u),B_s-B_u)}y^\eps_u$ for any $u<s$, it is a Markov process. Next, for any $A\in \F$, we have
\begin{equation}
\begin{aligned}
\E\E_B\{1_A(y^\eps_s)\}=&\int_\Omega \int_{\R^d}1_A(\tau_{(\eps^{2-\alpha}s,x)}\omega)q_s(x)dx\Pb(d\omega)\\
=&\int_{\R^d}\int_\Omega1_A(\tau_{(\eps^{2-\alpha}s,x)}\omega)\Pb(d\omega)q_s(x)dx=\Pb(A),
\label{eq:proofErgo}
\end{aligned}
\end{equation}
so $\Pb$ is an invariant measure and $y^\eps_s$ is stationary. Now if for some $A\in \F$, we have
\begin{equation}
\E_B\{1_A(y^\eps_s)\}=\int_{\R^d}1_A(\tau_{(\eps^{2-\alpha}s,x)}\omega)q_s(x)dx
=1_A(\omega)
\end{equation}
 for all $s>0$. Since $q_s(x)>0$ for all $s>0,x\in \R^d$, we obtain that $1_A(\tau_{(\eps^{2-\alpha}s,x)}\omega)=1_A(\omega)$
for all $s>0,x\in \R^d$. Let $s\to 0$, by the strongly continuity of $T_x$, we obtain $1_A(\tau_{(0,x)}\omega)=1_A(\omega)$ for all $x\in \R^d$. Hence,
\begin{equation}
1_A(\tau_{(s,x)}\omega)=1_A(\omega)
\label{eq:proofErgo1}
\end{equation} for all $s\geq 0,x\in \R^d$. Let $\omega=\tau_{(-s,x)}\omega$, we have \eqref{eq:proofErgo1} holds for all $(s,x)\in \R^{d+1}$. Therefore, by the ergodicity of $\tau$, we conclude that $\Pb(A)=0$ or $1$ and $y_s^\eps$ is ergodic with respect to $\Pb$. The proof is complete.
\end{proof}

\begin{lemma}
If $V$ is a mean zero, stationary random field with covariance function $R(t,x)$, and $B_s$ is Brownian motion independent from $V$, then
\begin{equation}
\E\E_{B}\{\left(\eps\int_0^{t/\eps^2}V(s,B_s)ds\right)^2\}\les t\int_{\R^d}\frac{\sup_t|R(t,x)|}{|x|^{d-2}}dx.
\end{equation}
\label{lem:VarBMRS}
\end{lemma}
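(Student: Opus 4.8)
The plan is to expand the square, use that the Brownian motion $B$ is independent of the random medium, and then reduce the resulting space--time double integral to a single spatial integral against the Green's function of $-\tfrac12\Delta$, whose $|x|^{2-d}$ singularity is integrable precisely because $d\ge 3$. This is the classical annealed variance bound for a Brownian motion in random scenery, and it rests on the transience of $B$ in dimension three and higher.

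First I would expand and apply Fubini (legitimate since $V$ is bounded): conditionally on the Brownian path, $\E\{V(s,B_s)V(u,B_u)\}=R(s-u,B_s-B_u)$ by the stationarity of $V$, so
\begin{equation*}
\E\E_B\Big\{\Big(\eps\int_0^{t/\eps^2}V(s,B_s)\,ds\Big)^2\Big\}
=\eps^2\int_0^{t/\eps^2}\int_0^{t/\eps^2}\E_B\{R(s-u,B_s-B_u)\}\,ds\,du,
\end{equation*}
which is bounded in absolute value by $\eps^2\int_0^{t/\eps^2}\int_0^{t/\eps^2}\E_B\{|R(s-u,B_s-B_u)|\}\,ds\,du$.

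Next I would note that $B_s-B_u$ is a centered Gaussian with covariance $|s-u|I_d$, hence has the law of $B_{|s-u|}$; bounding $|R(\tau,\cdot)|$ by $g(x):=\sup_\tau|R(\tau,x)|$ in the spatial variable gives $\E_B\{|R(s-u,B_s-B_u)|\}\le h(|s-u|)$ with $h(v):=\E_B\{g(B_v)\}=\int_{\R^d}g(x)q_v(x)\,dx$. A change of variables $v=s-u$ in the square $[0,t/\eps^2]^2$ then yields $\int_0^{t/\eps^2}\int_0^{t/\eps^2}h(|s-u|)\,ds\,du\le\tfrac{2t}{\eps^2}\int_0^\infty h(v)\,dv$, which already absorbs the $\eps^2$ prefactor and collapses the double time integration into the single factor $t$.

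Finally, by Tonelli $\int_0^\infty h(v)\,dv=\int_{\R^d}g(x)\big(\int_0^\infty q_v(x)\,dv\big)\,dx$, and the elementary substitution $w=|x|^2/(2v)$ gives $\int_0^\infty q_v(x)\,dv=c_d|x|^{2-d}$ with $c_d=\Gamma(\tfrac d2-1)/(2\pi^{d/2})$, finite exactly when $d\ge3$. Combining everything,
\begin{equation*}
\E\E_B\Big\{\Big(\eps\int_0^{t/\eps^2}V(s,B_s)\,ds\Big)^2\Big\}\les t\int_{\R^d}\frac{\sup_\tau|R(\tau,x)|}{|x|^{d-2}}\,dx .
\end{equation*}
The only delicate point is the dimensional restriction: it is the convergence of $\int_0^\infty q_v\,dv$ that simultaneously produces the Newtonian weight $|x|^{2-d}$ and removes one of the two time integrations, so $d\ge3$ is genuinely used. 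All remaining steps --- two applications of Fubini/Tonelli, the change of variables, and the stationarity of Brownian increments --- are routine; moreover, if the right-hand side diverges the inequality is vacuous, so no integrability assumption on $R$ is needed for the statement itself.
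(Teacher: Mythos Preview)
Your proof is correct and follows essentially the same approach as the paper: expand the square, use stationarity of $V$ and of Brownian increments to reduce to $\E_B\{g(B_{|s-u|})\}$ with $g=\sup_\tau|R(\tau,\cdot)|$, collapse the double time integral via the change of variables $v=|s-u|$, and integrate the heat kernel over time to produce the Newtonian potential $|x|^{2-d}$. The paper carries the factor $(t/\eps^2-v)$ through one more step before bounding it by $t/\eps^2$, but this is a cosmetic difference.
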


\begin{proof}
By direct calculation, we have
\begin{equation}
\begin{aligned}
&\E\E_{B}\{\left(\eps\int_0^{t/\eps^2}V(s, B_s)ds\right)^2\}\\
\les&\eps^2\int_0^{t/\eps^2}\int_0^s\int_{\R^d}\sup_t|R(t,x)|\frac{1}{(2\pi u)^{\frac{d}{2}}}e^{-\frac{|x|^2}{2u}}dxduds\\
\les&\eps^2\int_0^\infty du(\frac{t}{\eps^2}-u)1_{u<\frac{t}{\eps^2}}\int_{\R^d}\sup_t|R(t,x)|\frac{1}{(2\pi u)^{\frac{d}{2}}}e^{-\frac{|x|^2}{2u}}dx\\
\les&\eps^2\int_0^\infty d\lambda(\frac{t}{\eps^2}-\frac{|x|^2}{2\lambda})1_{\frac{|x|^2}{2\lambda}<\frac{t}{\eps^2}}\lambda^{\frac{d}{2}-2}e^{-\lambda}\int_{\R^d}\frac{1}{\pi^{\frac{d}{2}}}\sup_t|R(t,x)|\frac{1}{|x|^{d-2}}dx\\
\les& t\int_{\R^d}\frac{\sup_t|R(t,x)|}{|x|^{d-2}}dx.
\end{aligned}
\end{equation}
\end{proof}

\begin{lemma}
Let $X_i=(t_i,x_i)\in \R^{d+1},i=1,\ldots,4$, then under Assumption \ref{ass:mixing},
\begin{equation}
\begin{aligned}
&|\E\{V(X_1)V(X_2)V(X_3)V(X_4)\}-R(X_1-X_2)R(X_3-X_4)|\\
\leq &\Psi(X_1-X_3)\Psi(X_2-X_4)+\Psi(X_1-X_4)\Psi(X_2-X_3),
\end{aligned}
\end{equation}
where $0\leq\Psi(t,x)\leq C_\beta 1\wedge (t^2+|x|^2)^{-\beta}$ for any $\beta>0$ and some constant $C_\beta$.
\label{lem:4Moment}
\end{lemma}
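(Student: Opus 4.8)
The plan is to pass through the fourth joint cumulant. Writing $V_i=V(X_i)$ and using that $V$ is centered, the moment--cumulant identity for four variables reads
\[
\E\{V_1V_2V_3V_4\}=\kappa_4+R(X_1-X_2)R(X_3-X_4)+R(X_1-X_3)R(X_2-X_4)+R(X_1-X_4)R(X_2-X_3),
\]
where $\kappa_4$ is the fourth joint cumulant of $(V_1,V_2,V_3,V_4)$ (for mean-zero variables only the full partition and the three perfect matchings survive, and $\kappa(V_i,V_j)=\E\{V_iV_j\}=R(X_i-X_j)$ by stationarity). Hence the quantity in the lemma equals $\kappa_4+R(X_1-X_3)R(X_2-X_4)+R(X_1-X_4)R(X_2-X_3)$, and I would take $\Psi(t,x):=C_\beta\,(1\wedge(t^2+|x|^2)^{-\beta})$ with $C_\beta$ a large constant depending only on $\|\V\|_\infty$, $M$ and $\beta$, dominating each of these three terms by $\Psi(X_1-X_3)\Psi(X_2-X_4)+\Psi(X_1-X_4)\Psi(X_2-X_3)$.

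The two ``crossing'' products are easy: $V$ bounded gives $|R(t,x)|\le\|\V\|_\infty^2$, and two singletons at Euclidean distance $\ge M$ generate independent $\sigma$-algebras, so $R(t,x)=0$ whenever $t^2+|x|^2\ge M^2$; therefore $|R(y)|\le\tfrac12\Psi(y)$ as soon as $C_\beta\ge 2\|\V\|_\infty^2(1\vee M^{2\beta})$, and $R(X_1-X_3)R(X_2-X_4)$, $R(X_1-X_4)R(X_2-X_3)$ are each bounded by $\tfrac14$ of the target right-hand side.

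For $\kappa_4$ I would use the finite range of dependence. Form the graph $G$ on $\{1,2,3,4\}$ with an edge $\{i,j\}$ whenever $|X_i-X_j|<M$. If $G$ is disconnected, pick a component $C$: every $X_i$, $i\in C$, lies at distance $\ge M$ from every $X_j$, $j\notin C$, so the sets $\{X_i:i\in C\}$ and $\{X_j:j\notin C\}$ are at Euclidean distance $\ge M$, the random vectors $(V_i)_{i\in C}$ and $(V_j)_{j\notin C}$ are independent by Assumption \ref{ass:mixing}, and hence $\kappa_4=0$ --- the standard vanishing of a joint cumulant across an independent block, which for four variables is also immediate from the moment--cumulant formula applied to that partition. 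If $G$ is connected, a spanning tree has three edges, each of length $<M$, so $\max_{i,j}|X_i-X_j|<3M$ and hence $\Psi(X_i-X_j)\ge C_\beta(1\wedge(3M)^{-2\beta})>0$ for every pair, while $|\kappa_4|\le 4\|\V\|_\infty^4$ directly from the moment--cumulant formula and the bound on $V$. Enlarging $C_\beta$ (still depending only on $\|\V\|_\infty$, $M$, $\beta$) makes $|\kappa_4|$ at most $\tfrac34$ of the target right-hand side. Summing the three estimates gives the inequality, and by construction $0\le\Psi(t,x)\le C_\beta\,(1\wedge(t^2+|x|^2)^{-\beta})$ for the prescribed, arbitrary $\beta>0$.

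The estimate is soft once this reformulation is in place; the one delicate point is the vanishing of $\kappa_4$ on disconnected configurations --- upgrading ``index blocks at distance $\ge M$'' to genuine independence of the block variables and thence to vanishing of the joint cumulant --- together with checking that a single $C_\beta$ simultaneously absorbs $|R|\le\tfrac12\Psi$ on the support ball $\{t^2+|x|^2<M^2\}$ and the crude bound $|\kappa_4|\le 4\|\V\|_\infty^4$ on the connected configurations (which have diameter $<3M$). No mixing beyond finite range enters: $\Psi$ may be taken to decay arbitrarily fast precisely because the relevant $R$'s and $\kappa_4$ are compactly supported.
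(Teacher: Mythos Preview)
Your argument is correct. The cumulant decomposition cleanly isolates the two ``crossing'' pair products, which are compactly supported by finite range, and your dichotomy on the dependency graph handles $\kappa_4$: vanishing of joint cumulants across independent blocks in the disconnected case, and a crude uniform bound $|\kappa_4|\le 4\|\V\|_\infty^4$ against a uniform lower bound on the right-hand side in the connected case (diameter $<3M$). The constant bookkeeping is a little casual but the constraints on $C_\beta$ are compatible and depend only on $\|\V\|_\infty$, $M$, $\beta$, as you say.

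The paper itself does not prove this lemma; it simply cites \cite[Lemma~2.3]{hairer2013random}, noting that ``the mixing property is required'' there. Your route is therefore genuinely different and, in the present setting, more direct: because Assumption~\ref{ass:mixing} gives finite range of dependence (not merely quantitative mixing), both $R$ and $\kappa_4$ are compactly supported, which is exactly what lets you choose $\Psi$ with \emph{arbitrary} polynomial decay $\beta$. The cited lemma is formulated under a weaker mixing hypothesis and so has to work harder; the trade-off is that your argument would not survive a relaxation from finite range to, say, $\varphi$-mixing with polynomial rate, whereas the external lemma would (with $\beta$ then tied to the mixing rate). For the paper as written, your self-contained proof is a clean replacement for the citation.
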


\begin{proof}
See \cite[Lemma 2.3]{hairer2013random}, where the mixing property is required.
\end{proof}

%\bibliographystyle{siam}
%\bibliography{ref_homo_heatTime}

\end{document}